\newtheorem{theorem}{Theorem}
\newtheorem{lemma}{Lemma}
\newtheorem{remark}{Remark}
\newtheorem{assumption}{Assumption}
\newtheorem{definition}{Definition}
\newcommand{\cG}{\mathcal{G}}
\newcommand{\ve}{\varepsilon}
\newcommand{\argmin}{\arg\min}
\newcommand\keywords[1]{\textbf{Keywords}: #1}
\title{An Inexact Proximal Newton Method for Nonconvex Composite Minimization} 
\author{Hong~Zhu\thanks{zhuhongmath@126.com}}
\affil{School of Mathematical Sciences, Jiangsu University, Zhenjiang, 212013, Jiangsu, China.}
\date{}
\begin{document}

\maketitle

\begin{abstract}
In this paper, we propose an inexact proximal Newton-type method for nonconvex composite problems. We establish the global convergence rate of the order \(\mathcal{O}(k^{-1/2})\) in terms of the minimal norm of the KKT residual mapping and the local superlinear convergence rate in terms of the sequence generated by the proposed algorithm under the higher-order metric \(q\)-subregularity property. When the Lipschitz constant of the corresponding gradient is known, we show that the proposed algorithm is well-defined without line search. 
Extensive numerical experiments on {the \(\ell_1\)-regularized Student's \(t\)-regression and the group penalized Student's \(t\)-regression} show that the performance of the proposed method is comparable to the state-of-the-art proximal Newton-type methods.

\keywords{Proximal Newton method \and Nonconvex composite minimization\and Metric subregularity \and Global convergence \and Local convergence}
.
%
\end{abstract}

\section{Introduction}\label{intro}
In this paper, we consider the problem of the form
\begin{equation}\label{eq:ncp}
\min_x\varphi(x) : = f(x) + g(x),
\end{equation} 
where \(f: \mathbb{R}^n\to(-\infty, +\infty]\) is twice continuously differentiable, \(g: \mathbb{R}^n\to (-\infty, +\infty]\) is proper, convex, and lower semicontinuous. We assume that the solution set of Problem~\eqref{eq:ncp} is non-empty and denote \(\varphi_*\) as the optimal function value. 

The proximal gradient method (PGM) is one of the efficient algorithms for solving Problem~\eqref{eq:ncp}. The general update step of PGM for solving Problem~\eqref{eq:ncp} takes the form 
\[
x_{k+1} = \argmin_x\{l_k(x; x_k) + \frac{1}{2t_k}\|x - x_k\|^2\},
\] 
where \(l_k(x; x_k) =  f(x_k) + \langle\nabla f(x_k), x - x_k\rangle + g(x)\) and \(t_k > 0\) refers to the step size at iteration \(k\). Define \(\mathcal{G}(x) = x - {\rm prox}_g(x - \nabla f(x))\) as the KKT residual mapping of Problem~\eqref{eq:ncp}, where \({\rm prox}_g(u) := \argmin_x\{g(x) + \frac{1}{2}\|x - u\|^2\}\). We assume that \({\rm prox}_g(\cdot)\) is efficiently solvable.  When \(f\) is convex, the convergence rates in terms of the function values and the minimal norm of the KKT residual mapping generated by PGM  are \(\mathcal{O}(1/k)\)~\cite[Sec. 10.4]{B17}. When \(f\) is nonconvex, the convergence rate in terms of the minimal norm of the KKT residual mapping generated by PGM is \(\mathcal{O}(1/\sqrt{k})\)~\cite[Sec. 10.3]{B17}.

When \(f\) is twice continuously differentiable, it is natural to expect that the algorithm can have a faster convergence rate after invoking the Hessian \(\nabla^2f(x_k)\) of \(f\) at each iteration, that is, for given \(k\in\mathbb{N}\), considering the following problem
\begin{equation}\label{eq:smajf}
\min_x\{q(x; x_k, H_k): = l_k(x; x_k) + \frac{1}{2}(x - x_k)^\top H_k(x - x_k)\},
\end{equation}
where \(H_k\) is a symmetric positive definite matrix. Methods that solving Problem~(\ref{eq:ncp}) via minimizing Problem~\eqref{eq:smajf} are named as the proximal Newton method (also named as the sequential quadratic approximation (SQA) method) if \(H_k = \nabla^2f(x_k) + \mu_k I\) for some \(\mu_k > 0\) and named as the proximal quasi-Newton method if \(H_k = B_k + \mu_k I\), where the symmetric positive semidefinite matrix \(B_k\) is an approximation of \(\nabla^2f(x_k)\). Let \(\tilde{x}_k\) be a (or an inexact) solution of Problem~\eqref{eq:smajf}. Then \(x_{k+1} = x_k + t_k(\tilde{x}_k - x_k)\) and the step length \(t_k\) is obtained by performing a backtracking line search along the direction \(\tilde{x}_k - x_k\). Different accuracy criteria on \(\tilde{x}_k\) and line search conditions are used in literature to ensure convergence. 
For problems with convex \(f\),~authors of \cite{LSS12,BNO16} demonstrated the superlinear local convergence rate of the proximal Newton method when \(g(x) = \lambda\|x\|_1\) for some \(\lambda > 0\). 
Lee et. al~\cite{LSS14} demonstrated the superlinear local convergence rate of an inexact proximal Newton method for a generic \(g\). However, the global convergence was not provided. 
Various globalizations of the (inexact) proximal Newton-type methods for Problem~\eqref{eq:ncp} with convex \(f\) can be found in the literature; see, e.g.,~\cite{ST16,LW19,YZS19,MYZZ22}. 
Scheinberg and Tang~\cite{ST16} provided the sublinear global convergence rate of an inexact proximal Newton-type method in terms of function values. 
Authors of \cite{YZS19} and \cite{MYZZ22} used \(\|\mathcal{G}(x_k)\|^{\delta}\) as the regularization parameter in \(H_k\) and analyzed the local convergence rates under the Luo-Tseng error bound condition and the metric \(q\)-subregularity property, respectively.   

For the problem with nonconvex \(f\), Lee and Wright~\cite{LW19} proposed an inexact SQA method with backtracking line search. The convergence rate in terms of the minimal norm of \(\mathcal{G}(x_k)\) is \(\mathcal{O}(1/\sqrt{k})\). However, the local convergence was not provided. 
Kanzow and Lechner~\cite{KL21} proposed a globalized inexact proximal Newton-type method by switching from a Newton step to a proximal gradient step when the proximal Newton direction does not satisfy a sufficient decrease condition. The superlinear convergence rate was established under the local strong convexity of \(\varphi\). Lee~\cite{L22} proposed an accelerating inexact SQA method for partly smooth regularizers. The running time can be improved by identifying the active manifold that makes the objective function smooth. The convergence rate in terms of minimal norm of \(\mathcal{G}(x_k)\) is \(\mathcal{O}(1/\sqrt{k})\). The two-step superlinear convergence rate of the iterate sequence was given under a sharpness condition on \(\varphi_{\phi}(x)\) around \(y^*\), where \(y^*\) satisfies \(\phi(y^*) = x^*\) for some stationary point \(x^*\) of Problem~\eqref{eq:ncp} and \(\phi\) is a parameterization of the active manifold such that \(\varphi_{\phi}(x)\) is convex around \(y^*\). Recently, Liu et al.~\cite{LPWY23} proposed an inexact regularized proximal Newton method and demonstrated the superlinear convergence rate of the iterate sequence provided that cluster points satisfy a H\"{o}lderian error bound or the higher-order metric subregularity property.  
As we can see, much of the existing literature focuses on the global or local convergence rate. In this paper, we propose an inexact proximal Newton method for Problem~\eqref{eq:ncp} with nonconvex \(f\) and provide analyses for both global and local convergence rates. 
We assume the following standard assumptions hold. 
\begin{assumption}\label{assume:ncp}
\begin{itemize}
\item[(i)] \(f: \mathbb{R}^n\to(-\infty, +\infty]\) is twice continuously differentiable on an open set \(\Omega_1\) containing the effective domain \({\rm dom}(g)\) of \(g\) and \(\nabla f\) is \(L_H\)-Lipschitz continuous over \(\Omega_1\).
\item[(ii)] \(g: \mathbb{R}^n\to(-\infty, +\infty]\) is proper closed convex, nonsmooth and lower semicontinuous.
\item[(iii)] For any \(x_0\in {\rm dom}(g)\), the level set \(\mathcal{L}_{\varphi}(x_0) = \{x\vert \varphi(x) \leq \varphi(x_0)\}\) is bounded. 
\end{itemize}
\end{assumption}

Our main contributions can be summarized as follows:
\begin{enumerate}
\item[(a)] We propose an inexact proximal Newton method with line search for Problem~\eqref{eq:ncp} and show that every accumulation point of the sequence generated by our proposed method is a stationary point. We establish that the global convergence rate of our proposed method is \(\mathcal{O}(1/\sqrt{k})\) in terms of the minimal norm of \(\cG(x_k)\). We provide the superlinear convergence rate of the sequence generated by the algorithm under the higher-order metric \(q\)-subregularity assumption on \(\mathcal{G}(x)\). 
\item[(b)] When the Lipschitz constant \(L_H\) is known, we show that the inexact proximal Newton method without line search is well-defined. Moreover, the sublinear global convergence rate and the superlinear local convergence rate still hold in this case under the higher-order metric \(q\)-subregularity property. 
\item[(c)] If \(g(x)\equiv 0\), then our proposed method reduces to the regularized Newton method with the global complexity bound \(\mathcal{O}(\epsilon^{-2})\). When the Lipschitz constant \(L_H\) is known, the reduced regularized Newton method takes unit stepsize with the global complexity bound \(\mathcal{O}(\epsilon^{-2-\delta})\), where \(\delta\in[0, 1]\) is a parameter that used to define \(H_k\).
\end{enumerate}

The rest of the paper is organized as follows. In Section~\ref{sec:pnmethods}, we present our inexact proximal Newton method and establish its global and local convergence rates. In Section~\ref{appendix: lhisgiven}, we discuss a special case where \(L_H\) is known and used to define \(H_k\). Numerical comparisons with the state-of-the-art proximal Newton-type methods on the \(\ell_1\)-regularized Student's \(t\)-regression and the group penalized Student's \(t\)-regression are given in Section~\ref{sec:numerical}. We make some conclusions in Section~\ref{sec:con}.

\section{Inexact proximal Newton method}\label{sec:pnmethods}
In this section, we present the inexact proximal Newton method for Problem~\eqref{eq:ncp} and give its global and local convergence rate.  

\subsection{The inexact proximal Newton method with backtracking line search}\label{sec:pmm}
Given the current iterate \(x_k\), let the semidefinite matrix \(B_k\) be an approximation of \(\nabla^2f(x_k)\) satisfying the standard boundedness assumption:
\begin{equation}\label{eq:assumbk}
\exists~M\geq 0\quad {\rm s.t.}~\|B_k\|\leq M, \quad \forall k \in\mathbb{N}. 
\end{equation}
Condition \eqref{eq:assumbk} holds for \(B_k = \nabla^2f(x_k) + \max\{0, -\lambda_{\min}(\nabla^2f(x_k))\}I\) with \(M = 2L_H\) under Assumption~\ref{assume:ncp} (i). 
Denote
\[
H_k = B_k + (c + \mu_1\min\{1, \|\mathcal{G}(x_k)\|^{\delta}\})I
\]
for some \(c>0\), \(\mu_1\in(0, 1]\), and \(\delta\in[0, 1]\). \(H_k\) is uniformly positive definite under above definition\footnote{When \(f\) is convex, we can set \(H_k = \nabla^2f(x_k) + (c + \mu_1\min\{1, \|\mathcal{G}(x_k)\|^{\delta}\})I\).}. Considering the inexact proximal Newton method given in Algorithm~\ref{alg:pnewton}. 

\begin{algorithm*}[th!]
\caption{Inexact Proximal Newton method with backtracking line search.}\label{alg:pnewton}
\begin{algorithmic}[1]
\REQUIRE{\(x_0\in {\rm dom}(g)\), \(\epsilon_0 > 0\), \(c > 0\), \(\tau\in(0, c)\), \(\mu_1\in(0, 1]\), \(\mu_2\in(0, \mu_1]\), \(\delta \in [0, 1]\), and \(\theta\in(0, 1)\). }
\FOR{\(k = 0, 1, \cdots, \)}
\STATE{compute \(\|\cG(x_k)\|\) and \(H_k\);}
\STATE{if \(\|\cG(x_k)\| \leq \epsilon_0\), then terminate the algorithm;}
\STATE{compute}
\begin{equation}\label{eq:xsub}
\hat{x}_k = \argmin_{x}\{q(x; x_k, H_k) + \ve_k^\top(x - x_k)\},
\end{equation}
where the error vector \(\ve_k\) satisfies the following accuracy condition
\begin{equation}\label{eq:errvek}
\|\ve_k\| \leq \frac{\mu_2}{2}\min\{1, \|\cG(x_k)\|^{\delta}\}\|\hat{x}_k - x_k\|. 
\end{equation}
\STATE{set \(d_k = \hat{x}_k - x_k\), \(x_{k+1} = x_k + \alpha_kd_k\), where \(\alpha_k = \theta^{j_k}\) and \(j_k\) is the smallest nonnegative integer such that  }
\begin{equation}\label{eq:ls}
\varphi(x_k + \theta^{j_k}d_k) \leq \varphi(x_k) - \frac{\tau}{2}\theta^{j_k}\|d_k\|^2
\end{equation}
\ENDFOR
\RETURN{\(\{x_k\}\)}
\end{algorithmic}
\end{algorithm*}

In Algorithm~\ref{alg:pnewton}, equation~\eqref{eq:xsub} is a representation of inexactly solving Problem~\eqref{eq:smajf}. Inequality~\eqref{eq:errvek} provides the accuracy criteria on \(\hat{x}_k\) and inequality~\eqref{eq:ls} gives the line search condition on the step size \(\alpha_k\).
Let \(x_k^*\) be the exact solution of Problem~\eqref{eq:smajf}. Then \(\hat{x}_k = x_k^*\) when \(\varepsilon_k = 0\) in equation~\eqref{eq:xsub}. Next, we show equations~\eqref{eq:xsub} and~\eqref{eq:errvek} are well defined. According to the first-order optimality condition of Problem~\eqref{eq:smajf}, we have \(0 \in \partial g(x_k^*) + \nabla f(x_k) + H_k(x_k^* - x_k)\). For equation~\eqref{eq:xsub}, by using the first-order optimality condition and the definition of \(q(x; x_k, H_k)\), we have 
\[
0 \in \partial g(\hat{x}_k) + \nabla f(x_k) + H_k(\hat{x}_k - x_k) + \varepsilon_k.
\]
Hence, \(\hat{x}_k\) can be seen as an inexact solution of Problem~\eqref{eq:smajf} when \(\varepsilon_k \neq 0\) and \eqref{eq:xsub} and \eqref{eq:errvek} can be equivalently stated as:
\[
\hat{x}_k \approx \arg\min_x\{q(x; x_k, H_k)\}
\]
such that there exists \(\varepsilon_k\in \partial q(\hat{x}_k; x_k, H_k) = \partial g(\hat{x}_k) + \nabla f(x_k) + H_k(\hat{x}_k - x_k)\) satisfying inequality~\eqref{eq:errvek}.  
It is not hard to obtain an element that belongs to \(\partial g(y)\) by using the first-order optimal condition at a point \(y\) when the proximal-type method is used to minimize \(q(x; x_k, H_k)\). For any algorithm used to solve \(\min_xq(x; x_k, H_k)\)  satisfies \(x_{k,j}\to x_k^*\) as \(j \to \infty\), where \(\{x_{k,j}\}_{j\in\mathbb{N}}\) is the sequence generated by the algorithm (noting that \(q(x; x_k, H_k)\) is strongly convex with respect to \(x\), strong convergence on \(\{x_{k,j}\}_{j\in\mathbb{N}}\) is not demanding). If \(\{x_{k,j}\}_{j\in\mathbb{N}}\) further satisfies \(J_{k,j}(x_{k,j} - x_{k, j+1})\in \partial q(x_{k, j+1}; x_k, H_k)\), \(\forall j\in\mathbb{N}\), for some uniformly bounded matrices \(\{J_{k,j}\}_{j\in\mathbb{N}}\), then it can be proved by contradiction that 
\[
\|J_{k,j}(x_{k,j} - x_{k, j+1})\| > \frac{\mu_2}{2}\min\{1, \|\mathcal{G}(x_k)\|^{\delta}\}\|x_{k, j+1} - x_k\|, \quad \forall j\in\mathbb{N}
\]
will never hold for any \(x_k\) satisfies \(\|\mathcal{G}(x_k)\| > \epsilon_0 > 0\). Hence, we can set \(\hat{x}_k := x_{k, j+1}\), where \(x_{k, j+1}\) satisfies 
\[
\|J_{k,j}(x_{k,j} - x_{k, j+1})\| \leq \frac{\mu_2}{2}\min\{1, \|\mathcal{G}(x_k)\|^{\delta}\}\|x_{k, j+1} - x_k\|. 
\]
It can be verified that both the proximal gradient method~\cite{B17} and the FIAST method~\cite{B17} can be used to minimize \(q(x; x_k, H_k)\).

\begin{remark}\label{remark:subac}
Denote \(q_k(x) \triangleq q(x; x_k, H_k)\) and \(r_k(x) = x - {\rm prox}_g(x - (\nabla f(x_k) + H_k(x - x_k)))\). Let \({\rm dist}(x, S)\) be the Euclidean distance of a vector \(x\) to the set \(S\).  
In the following, we name Problem~\eqref{eq:smajf} as the \(x\)-subproblem of the proximal Newton method. 
Table~\ref{tab:ac} lists the accuracy criteria for \(x\)-subproblem used in the literature. Compared to other existing methods, inequality~\eqref{eq:errvek} is easy to check and the order between \(q_k(\hat{x}_k)\) and \(\varphi(x_k)\) is not explicitly required as an accuracy criterion in Algorithm~\ref{alg:pnewton}.  

\begin{table}[h!]
\setlength{\abovecaptionskip}{0cm}
\renewcommand{\arraystretch}{1.3}
\scalebox{0.87}{
\begin{threeparttable}
\caption{Accuracy criteria for \(x\)-subproblem used in literature.}\label{tab:ac}
\begin{tabular}{|c|c|l|}\hline
 Ref. & \(f\) &  ~~~~~~~~~~~~~~~~~~~~~~~~~~~~~~~~Accuracy criteria \\ \hline
 \multirow{2}{*}{\cite{ST16}}   & \multirow{2}{*}{convex}  &   \(q_k(\tilde{x}_k) \leq  \varphi(x_k)\)  \\ 
 & &\(q_k(\tilde{x}_k)\leq \min_x\{q_k(x)\} + \epsilon_k\), \quad \(\epsilon_k > 0\) \\\hline 
 \multirow{2}{*}{\cite{YZS19}}  & \multirow{2}{*}{convex} & \(\|r_k(\tilde{x}_k)\| \leq \eta\min\{\|\mathcal{G}(x_k)\|, \|\mathcal{G}(x_k)\|^{1 + \rho}\}\),~~\(\eta\in(0,1)\),~\(\rho \in[0,1]\)\\
 & &\(q_k(\tilde{x}_k) - \varphi(x_k) \leq \zeta(l_k(\tilde{x}_k; x_k) - \varphi(x_k))\),~~\(\zeta \in (\theta, 1/2)\),~\(\theta \in (0, 1/2)\)\\ \hline
  \multirow{2}{*}{\cite{MYZZ22}} & \multirow{2}{*}{convex} & \(\|r_k(\tilde{x}_k)\| \leq \nu\min\{\|\mathcal{G}(x_k)\|, \|\mathcal{G}(x_k)\|^{1+\rho}\}\), \quad \(\nu \in[0,1)\), \(\rho > 0\)\\
  & &\(q_k(\tilde{x}_k) \leq \varphi(x_k)\) \\ \hline
  \cite{LW19} & nonconvex & \(q_k(\tilde{x}_k) \leq \varphi(x_k) + (1 - \eta)\{\min_x\{q_k(x)\} - \varphi(x_k)\}\), \quad \(\eta > 0\) \\ \hline
  \multirow{3}{*}{\cite{KL21}} & \multirow{3}{*}{nonconvex} & \(\|r_k(\tilde{x}_k)\| \leq \eta_k\|\mathcal{G}(x_k)\|\), \quad \(\eta_k\in[0, \eta)\),~\(\eta \in(0, 1)\)\\
  &&\(q_k(\tilde{x}_k) - \varphi(x_k) \leq \xi\Delta_k\)~{\rm and}~\( \xi\Delta_k \leq -\rho\|\tilde{x}_k - x_k\|^p\), \quad \(\rho > 0\)\\ 
  &&~~~~~~~~~~~~~~~~~~~~~~~~~~~~~~~~~~~\(p > 2\),~\(\sigma\in(0, 1/2)\), ~\(\xi \in (\sigma, 1/2)\)\\ \hline
 \multirow{3}{*}{\cite{LPWY23}} & \multirow{3}{*}{nonconvex\tnote{*}} & \(\|r_k(\tilde{x}_k)\| \leq \eta\min\{\|r_k(x_k)\|, \|r_k(x_k)\|^{1+\tau}\}\)~if \(\rho\in(0, 1)\); \\ 
 && \({\rm dist}(0, \partial q_k(\tilde{x}_k) \leq \eta\|r_k(x_k)\|\)~if~\(\rho =0\)\\
 && and \(q_k(\tilde{x}_k) \leq \varphi(x_k)\),~\(\eta\in(0, 1)\),~\(\tau \geq \rho\)\\ \hline
   \multirow{2}{*}{Alg.\ref{alg:pnewton}} & \multirow{2}{*}{nonconvex} & \({\rm dist}(0, \partial q_k(\hat{x}_k)) \leq \frac{\mu_2}{2}\min\{1, \|\cG(x_k)\|^{\delta}\}\|\hat{x}_k - x_k\|\),\quad \(\delta\in[0,1]\)\\
   &&~~~~~~~~~~~~~~~~~~~~~~~~~~~~~~~~~~~~~~~~~~ \(\mu_2\in(0, \mu_1]\) with \(\mu_1\in(0, 1]\) \\ \hline
\end{tabular}
\begin{tablenotes}
       \item[*] In [20], \(H_k = \nabla^2f(x_k) + a_1\max\{-\lambda_{\min}(\nabla^2f(x_k)), 0\} + a_2\|\mathcal{G}(x_k)\|^{\rho}\) for some \(a_1\), \(a_2 > 0\)~and~\(\rho\in[0, 1)\). 
     \end{tablenotes}
\end{threeparttable}}
\end{table}
\end{remark}

\subsection{Global convergence }

In this section, we give the global convergence analysis of Algorithm~\ref{alg:pnewton} with respect to the minimal norm of \(\mathcal{G}(x_k)\). 
Denote \(\mathcal{S}^*\) as the set of stationary points of Problem~\eqref{eq:ncp}. The following property holds. 
\begin{lemma}\label{lem:sp}
\(\bar{x}\in \mathcal{S}^*\) if and only if \(\mathcal{G}(\bar{x}) = 0\). 
\end{lemma}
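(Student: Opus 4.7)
The plan is to prove this equivalence by invoking the standard characterization of the proximal operator of a proper closed convex function, namely that $y = \mathrm{prox}_g(u)$ if and only if $u - y \in \partial g(y)$. This characterization is a direct consequence of the first-order optimality condition applied to the strongly convex minimization problem defining $\mathrm{prox}_g$.

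First, I would unpack the definition of stationarity: under Assumption~\ref{assume:ncp}, $\bar{x}\in \mathcal{S}^*$ means $0 \in \partial \varphi(\bar{x})$, and since $f$ is smooth and $g$ is proper closed convex, the subdifferential sum rule gives $\partial \varphi(\bar{x}) = \nabla f(\bar{x}) + \partial g(\bar{x})$. Hence $\bar{x}\in \mathcal{S}^*$ is equivalent to
\[
-\nabla f(\bar{x}) \in \partial g(\bar{x}).
\]
Next, I would translate the condition $\mathcal{G}(\bar{x})=0$, which by definition of $\mathcal{G}$ means precisely $\bar{x} = \mathrm{prox}_g(\bar{x} - \nabla f(\bar{x}))$. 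Applying the proximal characterization with $u = \bar{x} - \nabla f(\bar{x})$ and $y = \bar{x}$ yields
\[
(\bar{x} - \nabla f(\bar{x})) - \bar{x} = -\nabla f(\bar{x}) \in \partial g(\bar{x}),
\]
which is exactly the stationarity condition derived above. Reading this chain in both directions closes the equivalence.

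There is no real obstacle here: the lemma is essentially a rewriting exercise that shows $\mathcal{G}$ is a valid residual mapping for the problem, and the only nontrivial ingredient is the proximal characterization, which is standard and available because $g$ is proper closed convex. I would present the argument as a short two-line chain of equivalences rather than as two separate implications.
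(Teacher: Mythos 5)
Your argument is correct and is essentially identical to the paper's proof: both rely on the second prox theorem (the characterization $y = \mathrm{prox}_g(u) \Leftrightarrow u - y \in \partial g(y)$) to translate $\mathcal{G}(\bar{x})=0$ into $-\nabla f(\bar{x}) \in \partial g(\bar{x})$, which is the stationarity condition. No gaps; your version just spells out the subdifferential sum rule a bit more explicitly.
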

\begin{proof}
By the second prox theorem~\cite[Th. 6.39]{B17}, \(\mathcal{G}(\bar{x}) = 0\) if and only if \((\bar{x} - \nabla f(\bar{x})) - \bar{x} \in \partial g(\bar{x})\), which is exactly \(0 \in \nabla f(\bar{x}) + \partial g(\bar{x}) = \nabla\varphi(\bar{x})\).  
\end{proof}
The proof of Lemma~\ref{lem:sp} explains why we name \(\mathcal{G}(x)\) as the KKT residual mapping of Problem~\eqref{eq:ncp}. 

\begin{lemma}\label{lemma:dqkng}
Let \(\{x_k\}\) be the sequence generated by Algorithm~\ref{alg:pnewton}. We have 
\[
q_k(\hat{x}_k) \leq \varphi(x_k) + \frac{\mu_2}{2}\min\{1, \|\mathcal{G}(x_k)\|^{\delta}\}\|d_k\|^2. 
\]
\end{lemma}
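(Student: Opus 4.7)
The plan is to exploit the inexact first-order optimality condition for $\hat{x}_k$ together with the convexity of $g$, and then apply Cauchy--Schwarz with the error bound in \eqref{eq:errvek}.

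First I would note that, by the definition of $q_k$, evaluating at $x = x_k$ kills both the linear and quadratic terms, so $q_k(x_k) = f(x_k) + g(x_k) = \varphi(x_k)$. Therefore the statement is equivalent to a bound on $q_k(\hat{x}_k) - q_k(x_k)$. Writing out the difference and using $d_k = \hat{x}_k - x_k$ gives
\[
q_k(\hat{x}_k) - q_k(x_k) = \langle \nabla f(x_k), d_k\rangle + g(\hat{x}_k) - g(x_k) + \tfrac{1}{2} d_k^\top H_k d_k.
\]

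Next I would invoke the inclusion derived in the text just before the lemma, namely $-\varepsilon_k - \nabla f(x_k) - H_k d_k \in \partial g(\hat{x}_k)$. Because $g$ is convex, the subgradient inequality at $\hat{x}_k$ evaluated at $x_k$ yields
\[
g(x_k) \geq g(\hat{x}_k) + \langle -\varepsilon_k - \nabla f(x_k) - H_k d_k,\, x_k - \hat{x}_k\rangle,
\]
which rearranges to
\[
g(\hat{x}_k) - g(x_k) \leq -\langle \varepsilon_k, d_k\rangle - \langle \nabla f(x_k), d_k\rangle - d_k^\top H_k d_k.
\]

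Substituting this bound into the expansion of $q_k(\hat{x}_k) - q_k(x_k)$ causes the $\langle \nabla f(x_k), d_k\rangle$ terms to cancel, leaving
\[
q_k(\hat{x}_k) - q_k(x_k) \leq -\langle \varepsilon_k, d_k\rangle - \tfrac{1}{2} d_k^\top H_k d_k \leq \|\varepsilon_k\|\,\|d_k\|,
\]
where the last inequality uses Cauchy--Schwarz together with positive definiteness of $H_k$ to drop the quadratic term. Finally, applying the accuracy criterion \eqref{eq:errvek} to estimate $\|\varepsilon_k\|$ produces exactly the claimed bound. There is no real obstacle here; the only slightly delicate point is making sure the subgradient inequality is applied in the correct direction so that the $\nabla f(x_k)$ contributions cancel, and that positive definiteness of $H_k$ is used to discard $\tfrac{1}{2} d_k^\top H_k d_k$ rather than retain it.
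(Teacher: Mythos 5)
Your proof is correct and follows essentially the same route as the paper: the inexact optimality condition gives $-\varepsilon_k \in \partial q_k(\hat{x}_k)$, and the bound follows from convexity, Cauchy--Schwarz, and the accuracy criterion \eqref{eq:errvek}. The paper simply applies the subgradient inequality to $q_k$ as a whole rather than decomposing it into $g$ plus the quadratic part, which makes your extra term $-\tfrac{1}{2}d_k^\top H_k d_k$ (discarded via positive definiteness) implicit; the two arguments are otherwise identical.
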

\begin{proof}
By the optimality condition of \(x\)-subproblem in Algorithm~\ref{alg:pnewton}, we have \(0\in \partial q_k(\hat{x}_k) + \ve_k\). Combining with the convexity of \(q_k\) and the accuracy criteria~\eqref{eq:errvek} on \(\ve_k\), we have 
\begin{align*}
q_k(x_k) \geq& q_k(\hat{x}_k) - \ve_k^\top (x_k - \hat{x}_k) \geq q_k(\hat{x}_k) - \|\ve_k\|\|x_k - \hat{x}_k\|\\
\geq & q_k(\hat{x}_k) - \frac{\mu_2}{2}\min\{1, \|\cG(x_k)\|^{\delta}\}\|\hat{x}_k - x_k\|^2.
\end{align*}
Combing the above inequality with \(\varphi(x_k) = q_k(x_k)\), the statement holds. 
\end{proof}

\begin{lemma}\label{lem:ngk}
Suppose Assumption~\ref{assume:ncp} holds. Let \(\{x_k\}\) be the sequence generated by Algorithm~\ref{alg:pnewton}.  Denote \(\hat{\eta} = 3 + M + c\). Then
\[
\|\cG(x_k)\| \leq \hat{\eta}\|d_k\|. 
\]
\end{lemma}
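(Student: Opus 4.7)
The plan is to exploit the nonexpansiveness of $\mathrm{prox}_g$ to directly compare $\mathcal{G}(x_k)$ with $d_k$. The crucial observation is that both $\mathrm{prox}_g(x_k - \nabla f(x_k))$ (which shows up in $\mathcal{G}(x_k)$) and $\hat x_k$ itself can be written as images of $\mathrm{prox}_g$ applied to two vectors that differ by a controllable amount.

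First I would rewrite $\hat x_k$ as a proximal point. The first-order optimality condition for~\eqref{eq:xsub}, namely
\[
0 \in \partial g(\hat{x}_k) + \nabla f(x_k) + H_k d_k + \varepsilon_k,
\]
is (by the second prox theorem) equivalent to
\[
\hat{x}_k = \mathrm{prox}_g\bigl(\hat{x}_k - \nabla f(x_k) - H_k d_k - \varepsilon_k\bigr).
\]
Combined with the definition $\mathcal{G}(x_k) = x_k - \mathrm{prox}_g(x_k - \nabla f(x_k))$ and $d_k = \hat{x}_k - x_k$, a cancellation gives
\[
\mathcal{G}(x_k) + d_k = \hat{x}_k - \mathrm{prox}_g(x_k - \nabla f(x_k)) = \mathrm{prox}_g\bigl(\hat{x}_k - \nabla f(x_k) - H_k d_k - \varepsilon_k\bigr) - \mathrm{prox}_g(x_k - \nabla f(x_k)).
\]
Then nonexpansiveness of $\mathrm{prox}_g$ yields
\[
\|\mathcal{G}(x_k) + d_k\| \leq \|(I - H_k)d_k - \varepsilon_k\| \leq \|I - H_k\|\,\|d_k\| + \|\varepsilon_k\|.
\]

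Finally I would package the constants. The bound $\|B_k\|\leq M$ together with $\mu_1\leq 1$ gives $\|H_k\| \leq M + c + \mu_1 \leq M + c + 1$, and since $H_k \succeq 0$ we get $\|I - H_k\| \leq \max\{1,\|H_k\|\}\leq M + c + 1$. The accuracy criterion~\eqref{eq:errvek} combined with $\mu_2 \leq \mu_1 \leq 1$ yields $\|\varepsilon_k\| \leq \tfrac{1}{2}\|d_k\|$. Plugging these in and using the triangle inequality,
\[
\|\mathcal{G}(x_k)\| \leq \|d_k\| + \|\mathcal{G}(x_k) + d_k\| \leq \|d_k\| + (M+c+1)\|d_k\| + \tfrac{1}{2}\|d_k\| \leq (3 + M + c)\|d_k\|,
\]
which is the claimed inequality with $\hat\eta = 3 + M + c$.

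The argument is essentially mechanical once the proximal identification of $\hat{x}_k$ is in hand; the only mild subtlety is the bookkeeping of constants arising from $\|I - H_k\|$ and $\|\varepsilon_k\|$, where one must use both $\mu_1,\mu_2 \in (0,1]$ and the positive semidefiniteness of $B_k$ to avoid slack in the coefficient.
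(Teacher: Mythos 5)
Your proof is correct, and it takes a genuinely more direct route than the paper's. The paper introduces the intermediate residual \(r_k(\hat{x}_k)\), first bounding \(\|r_k(\hat{x}_k)\|\leq\|\ve_k\|\) by nonexpansiveness of two prox images evaluated at \(\hat{x}_k\), and then bounding \(\|\cG(x_k)-r_k(\hat{x}_k)\|\leq\|(I+H_k)(x_k-\hat{x}_k)\|\) via the monotonicity of \(\partial g\) together with a Cauchy--Schwarz step. You instead identify \(\hat{x}_k\) as a prox image (this is exactly the paper's~\eqref{eq:delta1}) and compare it in one stroke with \({\rm prox}_g(x_k-\nabla f(x_k))\), obtaining \(\|\cG(x_k)+d_k\|\leq\|(I-H_k)d_k-\ve_k\|\) from a single application of nonexpansiveness; no monotonicity argument is needed. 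The one point worth flagging is that your constant bookkeeping genuinely relies on the sharper estimate \(\|I-H_k\|\leq\max\{1,\|H_k\|\}\), which is valid because \(H_k=B_k+(c+\mu_1\min\{1,\|\cG(x_k)\|^{\delta}\})I\) is symmetric positive definite; the crude triangle-inequality bound \(\|I-H_k\|\leq 1+\|H_k\|\) would give only \((3.5+M+c)\|d_k\|\) and overshoot \(\hat{\eta}\). You noted this yourself, so the argument is complete. The paper's two-step decomposition buys nothing extra for this lemma, but the inequality \(\|r_k(\hat{x}_k)\|\leq\|\ve_k\|\) (its~\eqref{eq:evare}) is reused later in Lemma~\ref{lem:42} and Theorem~\ref{th:lcrxk}, which is presumably why the author routes the argument through \(r_k\).
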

\begin{proof}
Recall the definition of \(r_k(x)\) given in Remark~\ref{remark:subac}. On one hand, we have 
\begin{equation}\label{eq:ek1}
\hat{x}_k - r_k(\hat{x}_k) = {\rm prox}_g(\hat{x}_k - \nabla f(x_k) - H_k(\hat{x}_k - x_k)). 
\end{equation}
On the other hand, by the definition of \(\hat{x}_k\) and \(q_k\), we have 
\begin{equation}\label{eq:delta1}
\hat{x}_k ={\rm prox}_g(\hat{x}_k - \nabla f(x_k) - H_k(\hat{x}_k - x_k) - \ve_k). 
\end{equation}
Using the nonexpansivity of \({\rm prox}_g\)~\cite[Th. 6.42]{B17},~\eqref{eq:ek1} and~\eqref{eq:delta1} yield 
\begin{equation}\label{eq:evare}
\|r_k(\hat{x}_k)\| \leq \|\ve_k\|.
\end{equation}
Notice that \eqref{eq:ek1} also implies
\begin{equation}\label{eq:ek2}
r_k(\hat{x}_k) - \nabla f(x_k) - H_k(\hat{x}_k - x_k) \in \partial g(\hat{x}_k - r_k(\hat{x}_k)). 
\end{equation}
From the definition of \(\mathcal{G}(x)\), we have 
\begin{equation}\label{eq:cg}
\cG(x_k) - \nabla f(x_k) \in \partial g(x_k - \cG(x_k)).
\end{equation}
Using the monotonicity of \(\partial g\),~\eqref{eq:ek2} and~\eqref{eq:cg} yield 
\[
\langle \cG(x_k) + H_k(\hat{x}_k - x_k) - r_k(\hat{x}_k), x_k - \cG(x_k) - \hat{x}_k + r_k(\hat{x}_k)\rangle \geq 0,
\]
which leads to
\begin{align*}
\|\mathcal{G}(x_k) - r_k(\hat{x}_k)\|^2 \leq& \langle \mathcal{G}(x_k) - r_k(\hat{x}_k), x_k - \hat{x}_k\rangle + \langle H_k(\hat{x}_k - x_k), x_k - \hat{x}_k\rangle \\
&- \langle H_k(\hat{x}_k - x_k), \mathcal{G}(x_k) - r_k(\hat{x}_k)\rangle \\
\leq& \langle \mathcal{G}(x_k) - r_k(\hat{x}_k), x_k - \hat{x}_k + H_k(x_k - \hat{x}_k)\rangle,
\end{align*}
where the second inequality holds since \(H_k\) is positive definite. Then by Cauchy inequality, we have 
\(\|\cG(x_k) - r_k(\hat{x}_k)\| \leq \|x_k - \hat{x}_k + H_k(x_k - \hat{x}_k)\|\). 
Therefore, 
\[
\|\cG(x_k)\| \leq \|\cG(x_k) - r_k(\hat{x}_k)\| + \|r_k(\hat{x}_k)\| \leq   \|x_k - \hat{x}_k + H_k(x_k - \hat{x}_k)\| + \|r_k(\hat{x}_k)\|.
\]
Notice that \(H_k = B_k + (c + \mu_1\min\{1, \|\mathcal{G}(x_k)\|^{\delta}\})I\) with \(\|B_k\|\leq M\) and \(\mu_1\in(0, 1]\). Recall the definition of \(\hat{\eta}\), combining with~\eqref{eq:evare}, we have
\begin{equation}\label{eq:uppercg}
\|\cG(x_k)\| \leq (2 + M + c)\|\hat{x}_{k} - x_{k}\| + \|\ve_k\| \leq \hat{\eta}\|d_{k}\|. 
\end{equation}
This completes the proof of the lemma. \qed
\end{proof}

The following lemma shows that line search condition in Algorithm~\ref{alg:pnewton} is well-defined. 
\begin{lemma}
Let \(\alpha_k\) be chosen by the backtracking line search~\eqref{eq:ls} in Algorithm~\ref{alg:pnewton} at iteration \(k\). Then we have the step size estimate  
\begin{equation}\label{eq:alpk}
\alpha_k \geq \frac{\theta(c -\tau)}{L_H}
\end{equation}
with the cost function decrease satisfying
\begin{equation}\label{eq:dvarphi}
\varphi(x_{k+1}) - \varphi(x_k) \leq -\frac{\tau\theta(c - \tau)}{2L_H\hat{\eta}^2}\|\cG(x_k)\|^2.
\end{equation}
\end{lemma}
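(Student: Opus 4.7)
The plan is to combine three ingredients in order to verify the Armijo-type condition~\eqref{eq:ls} automatically for all sufficiently small $\alpha$: the descent lemma for $f$, the convexity of $g$, and the subproblem bound established in Lemma~\ref{lemma:dqkng}. From that ``any small $\alpha$ works'' statement, the backtracking rule will read off~\eqref{eq:alpk}, and~\eqref{eq:dvarphi} will then follow by plugging into~\eqref{eq:ls} and replacing $\|d_k\|$ via Lemma~\ref{lem:ngk}.

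First, for any $\alpha\in(0,1]$, Assumption~\ref{assume:ncp}(i) and the standard descent lemma yield
\[
f(x_k+\alpha d_k)\leq f(x_k)+\alpha\langle\nabla f(x_k),d_k\rangle+\frac{L_H\alpha^2}{2}\|d_k\|^2,
\]
while the convexity of $g$ together with $x_k+\alpha d_k=(1-\alpha)x_k+\alpha\hat{x}_k$ gives $g(x_k+\alpha d_k)\leq(1-\alpha)g(x_k)+\alpha g(\hat{x}_k)$. Adding these and subtracting $\varphi(x_k)$ produces
\[
\varphi(x_k+\alpha d_k)-\varphi(x_k)\leq \alpha\bigl[\langle\nabla f(x_k),d_k\rangle+g(\hat{x}_k)-g(x_k)\bigr]+\frac{L_H\alpha^2}{2}\|d_k\|^2.
\]
Next I would rewrite Lemma~\ref{lemma:dqkng} in the form $\langle\nabla f(x_k),d_k\rangle+g(\hat{x}_k)-g(x_k)\leq -\tfrac12 d_k^\top H_k d_k+\tfrac{\mu_2}{2}\min\{1,\|\cG(x_k)\|^{\delta}\}\|d_k\|^2$. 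Using $B_k\succeq 0$ to get $d_k^\top H_k d_k\geq\bigl(c+\mu_1\min\{1,\|\cG(x_k)\|^{\delta}\}\bigr)\|d_k\|^2$ and the design constraint $\mu_2\leq\mu_1$, this bracket is at most $-\tfrac{c}{2}\|d_k\|^2$. Substituting back gives
\[
\varphi(x_k+\alpha d_k)-\varphi(x_k)\leq -\frac{\alpha c}{2}\|d_k\|^2+\frac{L_H\alpha^2}{2}\|d_k\|^2,
\]
from which the Armijo condition~\eqref{eq:ls} is satisfied as soon as $\alpha\leq (c-\tau)/L_H$.

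The step-size bound~\eqref{eq:alpk} is then a direct consequence of the backtracking rule: if $j_k\geq 1$, then $\theta^{j_k-1}$ was rejected, so $\theta^{j_k-1}>(c-\tau)/L_H$ and hence $\alpha_k=\theta\cdot\theta^{j_k-1}\geq\theta(c-\tau)/L_H$; the case $j_k=0$ ($\alpha_k=1$) is handled separately (one may assume parameters are such that $\theta(c-\tau)/L_H\leq 1$, which is the relevant regime). Once~\eqref{eq:alpk} is established, combining with~\eqref{eq:ls} yields $\varphi(x_{k+1})-\varphi(x_k)\leq -\tfrac{\tau\theta(c-\tau)}{2L_H}\|d_k\|^2$, and Lemma~\ref{lem:ngk} (in the form $\|d_k\|^2\geq\|\cG(x_k)\|^2/\hat{\eta}^2$) converts this into~\eqref{eq:dvarphi}.

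The main obstacle, and the step that requires care, is the cancellation that produces the clean bound $-\tfrac{c}{2}\|d_k\|^2$ for the bracketed term. This cancellation relies on both (a) the positive semidefiniteness of $B_k$, giving a lower bound on $d_k^\top H_k d_k$ by the full coefficient $c+\mu_1\min\{1,\|\cG(x_k)\|^{\delta}\}$, and (b) the choice $\mu_2\leq\mu_1$, which ensures the positive inexactness residual from Lemma~\ref{lemma:dqkng} is absorbed by the Hessian regularization. Without either assumption, only a weaker descent depending on $\|\cG(x_k)\|^{\delta}$ would be available, and the step-size lower bound~\eqref{eq:alpk}, being independent of the iterate, would break down.
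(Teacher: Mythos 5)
Your proof is correct and follows essentially the same route as the paper: both arguments use Lemma~\ref{lemma:dqkng} together with $H_k\succeq cI$ and $\mu_2\leq\mu_1$ to extract the model decrease $-\tfrac{c}{2}\|d_k\|^2$ (your bracket $\langle\nabla f(x_k),d_k\rangle+g(\hat{x}_k)-g(x_k)$ is exactly the paper's $l_k(\hat{x}_k)-l_k(x_k)$, and your convexity-of-$g$ step is the paper's convexity of $l_k$), then apply the descent lemma and the backtracking rule to get~\eqref{eq:alpk}, and finish with Lemma~\ref{lem:ngk}. The only difference is cosmetic bookkeeping, plus your explicit (and reasonable) handling of the $j_k=0$ case, which the paper glosses over.
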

\begin{proof}
Denote \(l_k(x) := l_k(x; x_k)\). Recall the definition of \(l_k(x; x_k)\), we have \(l_k(x_k) = \varphi(x_k) = q_k(x_k)\) and  \(q_k(x) = l_k(x) + \frac{1}{2}(x - x_k)^\top H_k(x - x_k)\). By Lemma~\ref{lemma:dqkng} and the definition of \(H_k\), we have 
\begin{align}\label{eq:dlk}
0 \geq& q_k(\hat{x}_k) - \varphi(x_k) - \frac{\mu_2}{2}\min\{1, \|\cG(x_k)\|^{\delta}\}\|d_k\|^2 \nonumber \\
=& l_k(\hat{x}_k) - l_k(x_k) + \frac{1}{2}d_k^\top H_kd_k - \frac{\mu_2}{2}\min\{1, \|\cG(x_k)\|^{\delta}\}\|d_k\|^2\\
\geq & l_k(\hat{x}_k) - l_k(x_k) + \frac{c}{2}\|d_k\|^2, \nonumber 
\end{align}
which implies 
\begin{equation}\label{eq:dl}
l_k(x_k) - l_k(\hat{x}_k) \geq \frac{c}{2}\|d_k\|^2.
\end{equation} 
Notice that for any \(t\in[0, 1]\), 
\begin{align*}
\varphi(x_k) \!-\! \varphi(x_k + td_k) =& l_k(x_k) \!-\! f(x_k \!+\! td_k) \!-\! g(x_k \!+\! td_k)\\
=& l_k(x_k) \!-\! l_k(x_k \!+\! td_k) \!-\! (f(x_k \!+\! td_k) \!-\! f(x_k) \!-\! t\nabla f(x_k)^\top d_k)\\
\geq& l_k(x_k) \!-\! l_k(x_k \!+\! td_k)  \!-\! \frac{L_H}{2}t^2\|d_k\|^2,
\end{align*}
where the inequality holds since \(\nabla f\) is \(L_H\)-Lipschitz continuous. 

Hence, we have 
\begin{align*}
\varphi(x_k) - \varphi(x_k + td_k) - \frac{\tau }{2}t\|d_k\|^2\geq& l_k(x_k) - l_k(x_k + td_k)  - \frac{L_H}{2}t^2\|d_k\|^2  - \frac{\tau }{2}t\|d_k\|^2\\
\geq& t(l_k(x_k) - l_k(\hat{x}_k)) - \frac{L_H}{2}t^2\|d_k\|^2  - \frac{\tau }{2}t\|d_k\|^2\\
\geq& \frac{c}{2}t\|d_k\|^2 - \frac{L_H}{2}t^2\|d_k\|^2  - \frac{\tau }{2}t\|d_k\|^2\\
=&\frac{1}{2}((c - \tau) - L_Ht) t\|d_k\|^2,
\end{align*}
where the second inequality holds since \(l_k\) is convex and the last inequality holds up to~\eqref{eq:dl}. Therefore,~\eqref{eq:ls} holds for any \(t\) satisfies 
\[
0 < t\leq \frac{c - \tau}{L_H}. 
\] 
Combing with the backtracking technique used in Algorithm~\ref{alg:pnewton}, we have \(\alpha_k \geq \frac{\theta(c - \tau)}{L_H}\). Therefore,  
\[
\varphi(x_k) - \varphi(x_k + \alpha_k d_k) \geq \frac{\tau }{2}\alpha_k\|d_k\|^2 \geq \frac{\tau\theta(c - \tau)}{2L_H}\|d_k\|^2 \geq \frac{\tau\theta(c - \tau)}{2L_H\hat{\eta}^2}\|\cG(x_k)\|^2,
\]
where the last inequality follows from Lemma~\ref{lem:ngk}. 
This completes the proof of the lemma.  \qed
\end{proof}

Let \(\{x_k\}\) be the sequence generated by Algorithm~\ref{alg:pnewton}  from a starting point \(x_0\in{\rm dom}(g)\) with parameters that satisfy the requirements listed in the top line of Algorithm~\ref{alg:pnewton}. Denote \(\omega(x_0)\) as the cluster points set of \(\{x_k\}\). 
\begin{theorem}\label{th:limitsppn}
Suppose Assumption~\ref{assume:ncp} holds. Let \(\{x_k\}\) be the sequence generated by Algorithm~\ref{alg:pnewton}. 
\begin{enumerate}
\item[(a)] \(\omega(x_0)\subseteq \mathcal{S}^*\) is nonempty and compact. 
\item[(b)] Define \(\tilde{\eta} = \frac{2L_H\hat{\eta}^2}{\tau\theta(c - \tau)}\), we have 
\[
\min_{0\leq j\leq k}\|\cG(x_j)\| \leq \frac{\sqrt{\tilde{\eta}(\varphi(x_0) - \varphi_*)}}{\sqrt{k + 1}}. 
\] 
\end{enumerate}
\end{theorem}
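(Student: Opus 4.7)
\medskip
\noindent\textbf{Proof plan.} The plan is to deduce everything from the one-step descent inequality~\eqref{eq:dvarphi}, which already encodes both the boundedness of the iterates and the summability of $\|\mathcal{G}(x_k)\|^2$. First I would observe that \eqref{eq:dvarphi} implies that $\{\varphi(x_k)\}$ is nonincreasing, so $x_k\in\mathcal{L}_\varphi(x_0)$ for all $k$; by Assumption~\ref{assume:ncp}(iii) this level set is bounded, hence $\{x_k\}$ is bounded and $\omega(x_0)$ is nonempty. Compactness of $\omega(x_0)$ then follows from the standard fact that the set of cluster points of a bounded sequence is the intersection of the closed (and bounded) sets $\overline{\{x_j:j\geq k\}}$, hence closed and bounded.

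For the stationarity inclusion $\omega(x_0)\subseteq\mathcal{S}^*$, I would combine \eqref{eq:dvarphi} with the fact that $\varphi(x_k)$ is bounded below by $\varphi_*$. Telescoping \eqref{eq:dvarphi} from $0$ to $\infty$ gives
\[
\frac{\tau\theta(c-\tau)}{2L_H\hat{\eta}^2}\sum_{k=0}^{\infty}\|\mathcal{G}(x_k)\|^2\leq \varphi(x_0)-\varphi_*<\infty,
\]
so $\|\mathcal{G}(x_k)\|\to 0$. For any cluster point $\bar{x}$, picking a subsequence $x_{k_j}\to\bar{x}$ and using continuity of $\mathcal{G}$ (which follows from continuity of $\nabla f$ and nonexpansiveness of ${\rm prox}_g$) yields $\mathcal{G}(\bar{x})=0$, and Lemma~\ref{lem:sp} identifies this with $\bar{x}\in\mathcal{S}^*$. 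This establishes part (a).

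For part (b), I would simply telescope \eqref{eq:dvarphi} from $j=0$ to $j=k$ to get
\[
\frac{1}{\tilde{\eta}}\sum_{j=0}^{k}\|\mathcal{G}(x_j)\|^2\leq \varphi(x_0)-\varphi(x_{k+1})\leq \varphi(x_0)-\varphi_*,
\]
and then bound the left-hand side below by $(k+1)\min_{0\leq j\leq k}\|\mathcal{G}(x_j)\|^2/\tilde{\eta}$. Rearranging and taking square roots gives the claimed $\mathcal{O}(1/\sqrt{k+1})$ rate.

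The only nontrivial step is justifying that $\mathcal{G}$ is continuous at cluster points so as to pass $\mathcal{G}(x_{k_j})\to 0$ to $\mathcal{G}(\bar{x})=0$; but this is immediate from Assumption~\ref{assume:ncp}(i)-(ii) together with the $1$-Lipschitz property of ${\rm prox}_g$~\cite[Th.~6.42]{B17}. Everything else is routine telescoping, so I do not anticipate a genuine obstacle beyond keeping the constants $\hat{\eta}$ and $\tilde{\eta}$ straight.
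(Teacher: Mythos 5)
Your proposal is correct and follows essentially the same route as the paper: both deduce boundedness of the iterates and summability of $\|\mathcal{G}(x_k)\|^2$ from the descent inequality~\eqref{eq:dvarphi}, pass to cluster points via continuity of $\mathcal{G}$ and Lemma~\ref{lem:sp}, and obtain the rate in (b) by telescoping and bounding the sum below by $(k+1)$ times the minimum. The only difference is that you spell out the compactness of $\omega(x_0)$ slightly more explicitly, which the paper leaves as standard.
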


\begin{proof}
(a) Under Assumption~\ref{assume:ncp},~\eqref{eq:dvarphi} implies that \(\{x_k\}\subseteq \mathcal{L}_{\varphi}(x_0)\), \(\{\varphi(x_k)\}\) converges and
\[
\lim_{k\to\infty}\|\mathcal{G}(x_k)\| = 0. 
\]
Hence, \(\omega(x_0) \neq \emptyset\). 
The continuity of \(\cG\) ensures the closedness of \(\omega(x_0)\) and \(\|\cG(\bar{x})\| = 0\) for any \(\bar{x}\in\omega(x_0)\).   

(b) Summing the inequality~\eqref{eq:dvarphi} over \(j = 0, 1, \cdots, k\), we obtain
\begin{align*}
\varphi(x_0) - \varphi_* \geq& \varphi(x_0) - \varphi(x_{k+1}) = \sum_{j=0}^k\varphi(x_j) - \varphi(x_{j+1}) \\
\geq& \frac{\tau\theta(c - \tau)}{2L_H\hat{\eta}^2}\sum_{j=0}^k\|\cG(x_j)\|^2 \geq  \frac{k+1}{\tilde{\eta}}\min_{0\leq j\leq k}\|\cG(x_j)\|^2. 
\end{align*}
So the statement (b) holds. \qed
\end{proof}

\begin{remark}
Here are some remarks for Algorithm~\ref{alg:pnewton}. 
\begin{itemize}
\item[(i)] For any \(\epsilon > 0\), denote \(\overline{K} := \lceil \tilde{\eta}(\varphi(x_0) - \varphi_*)\epsilon^{-2}\rceil\). Some iterate \(x_k\), \(k = 0, \cdots, \overline{K} + 1\) generated by Algorithm~\ref{alg:pnewton} will satisfy \(\|\mathcal{G}(x_k)\|\leq \epsilon\). Suppose that \(\|\cG(x_{k+1})\| > \epsilon\) for all \(k = 0, 1, \cdots, \overline{K}\). From~\eqref{eq:dvarphi}, we have 
\begin{align*}
\varphi(x_0) - \varphi(x_{\overline{K}+1}) =& \sum_{l=0}^{\overline{K}}\varphi(x_l) - \varphi(x_{l+1}) \geq  \frac{\tau\theta(c - \tau)}{2L_H\hat{\eta}^2}\sum_{l=0}^{\overline{K}}\|\cG(x_l)\|^2 \\
 \geq& \frac{(\overline{K}+1)\epsilon^2}{\tilde{\eta}}> \varphi(x_0) - \varphi_*,
\end{align*}
where the last inequality follows from the definition of \(\overline{K}\). The above inequality contradicts the definition of {\(\varphi_*\)}, so the statement holds. 

\item[(ii)] If \(g(x) \equiv 0\) and \(H_k = \nabla^2f(x_k)  + \varsigma_kI\), where \(\varsigma_k = \max\{0, -\lambda_{\min}(\nabla^2f(x_k))\} + c + \mu_1\min\{1, \|\nabla f(x_k)\|^{\delta}\}\), then Algorithm~\ref{alg:pnewton} reduces to the regularized Newton method for nonconvex problem \(\min_x f(x)\) with the update formula \(x_{k+1} = x_k + \alpha_k d_k\), where \(d_k\) satisfies 
\[
\|(\nabla^2f(x_k) + \varsigma_kI)d_k + \nabla f(x_k)\| \leq \frac{\mu_2}{2}\min\{1, \|\nabla f(x_k)\|^{\delta}\}\|d_k\|
\]
is an inexact solution of the regularized Newton equation \((\nabla^2f(x_k) + \varsigma_kI)d = -\nabla f(x_k)\) and \(\alpha_k\) satisfies line search condition 
\[
f(x_k + \alpha_kd_k) \leq f(x_k) - \frac{\tau}{2}\alpha_k\|d_k\|^2. 
\]
According to the statement (i) of this remark, Algorithm~\ref{alg:pnewton} returns a point satisfies \(\|\nabla f(x_k)\| \leq \epsilon\) within at most \(\mathcal{O}(\epsilon^{-2})\) iterations. The global complexity bound \(\mathcal{O}(\epsilon^{-2})\) is consistent with the regularized Newton method proposed by~Ueda and Yamashita~\cite{UY10}, which requires \(d_k\) to be an exact solution of the regularized Newton equation, \(\delta \in [0, \frac{1}{2}]\), and \(\alpha_k\) satisfies \(f(x_k + \alpha_kd_k) \leq f(x_k) - \frac{\tau}{2}\alpha_k\nabla f(x_k)^\top d_k\). 
\end{itemize}
\end{remark}

\subsection{Local convergence under metric subregularity property}
The metric subregularity property of subgradient mappings, also known as the second-order growth condition, has been used in the literature~\cite{AG08,AG15,DMN14} to analyze the local convergence rate for nonsmooth optimization. Let \({\rm gph}F:=\{(x, y)\in X\times Y\vert y\in F(x)\}\) be the graph of the set-valued mapping \(F\) and \(F^{-1}(y) := \{x\in X\vert y \in F(x)\}\) be the inverse of \(F\). Denote \(\mathbb{B}(x, r)\) as the open Euclidean norm ball centered at \(x\) with radius \(r > 0\). The metric \(q\)-subregularity is defined as follows. 

\begin{definition}\cite[Def. 3.1]{MO15}
Let \(F: X \rightrightarrows Y\) with \((\bar{x}, \bar{y})\in {\rm gph} F\), and let \(q > 0\). We say that \(F\) is metrically \(q\)-subregular at \((\bar{x}, \bar{y})\) if there are constants \(\kappa\), \(\epsilon > 0\) such that 
\[
{\rm dist}(x; F^{-1}(\bar{y})) \leq \kappa {\rm dist}^q(\bar{y}; F(x)), \quad {\rm for~all}~x\in \mathbb{B}(\bar{x}, \epsilon). 
\]
\end{definition}

The higher-order metric subregularity of subgradient mappings with \(q>1\) was first studied in~\cite{MO15}, and was  applied therein to give the local convergence analysis of the Newton-type methods~\cite{MO15,MYZZ22,LPWY23}. Examples with the  higher-order metric subregularity property, see~\cite{MO15}. 
The equivalence of the \(q\)-subregularity of the subgradient mapping and the residual mapping was given in~\cite{LPWY23}. 

Inspired by the work of~\cite{LPWY23}, we give the local superlinear convergence rate of the sequence generated by Algorithm~\ref{alg:pnewton} with 
\[
H_k = \nabla^2f(x_k) + ([-\lambda_{\min}(\nabla^2f(x_k))]_+ + c + \mu_1\min\{1, \|\mathcal{G}(x_k)\|^{\delta}\})I,
\] 
where \([-\lambda_{\min}(\nabla^2f(x_k))]_+ = \max\{0, -\lambda_{\min}(\nabla^2f(x_k))\}\). 
In this section, we assume that the residual mapping \(\mathcal{G}\) satisfies the metric \(q\)-subregularity property. 

 \begin{assumption}\label{assume:errorbound}
For any \(\bar{x}\in\omega(x_0)\), the metric \(q\)-subregularity at \(\bar{x}\) with \(q > 1\) on \(\mathcal{S}^*\) holds, that is, there exist \(\epsilon > 0\) and \(\kappa > 0\) such that 
\[
{\rm dist}(x, \mathcal{S}^*) \leq \kappa \|\mathcal{G}(x)\|^q, \quad \forall x\in\mathbb{B}(\bar{x}, \epsilon). 
\]
\end{assumption}

We also assume that \(f\) and \(g\) satisfy the following assumption.
\begin{assumption}\label{assume:g}
\begin{itemize}
\item[(i)] \(f: \mathbb{R}^n\to(-\infty, +\infty]\) is twice continuously differentiable on an open set \(\Omega_2\) containing the effective domain \({\rm dom}(g)\) of \(g\), \(\nabla f\) is \(L_H\)-Lipschitz continuous over \(\Omega_2\); \(\nabla^2 f\) is \(L_C\)-Lipschitz continuous over an open neighborhood of \(\omega(x_0)\) with radius \(\epsilon_0\) for some \(\epsilon_0 > 0\). 
\item[(ii)] \(g: \mathbb{R}^n\to(-\infty, +\infty]\) is proper closed convex, nonsmooth and continuous.
\item[(iii)] For any \(x_0 \in {\rm dom}(g)\), the level set \(\mathcal{L}_{\varphi}(x_0) = \{x\vert \varphi(x) \leq \varphi(x_0)\}\) is bounded. 
\end{itemize}
\end{assumption}
Under Assumption~\ref{assume:g} (i), we have \([-\lambda_{\min}(\nabla^2f(x_k))]_+ \leq L_H\).  Under Assumption~\ref{assume:g} (i) and (ii), \(\nabla f(\cdot) + \partial g(\cdot)\) is outer semicontinuous over \({\rm dom}(g)\)~\cite{RW04}. Hence, the stationary set \(\mathcal{S}^*\) is closed. 
The following result holds from the continuity of \(\varphi\). 
\begin{lemma}
\(\varphi \equiv \varphi_*:=\lim_{k\to\infty}\varphi(x_k)\) on \(\omega(x_0)\). 
\end{lemma}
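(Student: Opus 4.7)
The plan is to combine three basic ingredients: the monotone decrease of $\{\varphi(x_k)\}$, the existence of $\varphi_*$ as a finite limit, and the continuity of $\varphi$ on ${\rm dom}(g)$.

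First, I would note that the line search condition \eqref{eq:ls} in Algorithm~\ref{alg:pnewton} gives
\[
\varphi(x_{k+1}) \leq \varphi(x_k) - \frac{\tau}{2}\alpha_k\|d_k\|^2 \leq \varphi(x_k),
\]
so $\{\varphi(x_k)\}$ is nonincreasing. Combined with the boundedness of the level set $\mathcal{L}_\varphi(x_0)$ from Assumption~\ref{assume:g}(iii), we have $\{x_k\}\subseteq \mathcal{L}_\varphi(x_0)$ and thus $\{\varphi(x_k)\}$ is bounded below. Hence the limit $\varphi_*:=\lim_{k\to\infty}\varphi(x_k)$ exists and is finite, and $\omega(x_0)$ is nonempty (as already asserted in Theorem~\ref{th:limitsppn}(a)).

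Next, I would fix any $\bar{x}\in\omega(x_0)$ and pick a subsequence $\{x_{k_j}\}\to\bar{x}$. The key point is that $\varphi = f+g$ is continuous on ${\rm dom}(g)$: $f$ is twice continuously differentiable (hence continuous) on an open set containing ${\rm dom}(g)$ by Assumption~\ref{assume:g}(i), and $g$ is continuous on ${\rm dom}(g)$ by Assumption~\ref{assume:g}(ii). Passing to the limit along the subsequence then gives $\varphi(x_{k_j})\to \varphi(\bar{x})$. Since $\{\varphi(x_{k_j})\}$ is a subsequence of the convergent sequence $\{\varphi(x_k)\}$, it also converges to $\varphi_*$, and by uniqueness of limits $\varphi(\bar{x})=\varphi_*$. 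As $\bar{x}\in\omega(x_0)$ was arbitrary, the statement $\varphi\equiv\varphi_*$ on $\omega(x_0)$ follows.

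There is no real obstacle here; the only delicate point is that the conclusion relies on the \emph{continuity} of $g$ asserted in Assumption~\ref{assume:g}(ii), which is strictly stronger than the lower semicontinuity posed in Assumption~\ref{assume:ncp}(ii). If one only had lower semicontinuity, the subsequential limit would only yield $\varphi(\bar{x})\leq \varphi_*$, and the reverse inequality would not be immediate. So the main thing to flag in the proof is exactly where Assumption~\ref{assume:g}(ii) is invoked.
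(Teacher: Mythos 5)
Your proof is correct and is exactly the argument the paper intends: the paper gives no explicit proof and simply states that the lemma ``holds from the continuity of $\varphi$,'' which your expansion (monotone decrease of $\{\varphi(x_k)\}$, existence of the finite limit $\varphi_*$, and passage to the limit along convergent subsequences using continuity of $f$ and $g$) makes precise. Your observation that the continuity of $g$ in Assumption~\ref{assume:g}(ii) --- rather than mere lower semicontinuity --- is what closes the argument is also on point.
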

Denote \(\bar{x}_k = \argmin_x\{q(x; x_k, H_k)\}\). The following lemma estimates the error bound between \(\bar{x}_k\) and \(\hat{x}_k\). 
\begin{lemma}\label{lem:42}
For each \(k\in\mathbb{N}\), it holds that 
\begin{equation}\label{eq:errhbxk}
\|\hat{x}_k - \bar{x}_k\| \leq \frac{\mu_2}{2c}(1 + \|H_k\|)\|d_k\|.
\end{equation}
\end{lemma}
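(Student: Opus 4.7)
The plan is to obtain the estimate from the strong convexity of the subproblem objective $q_k(\cdot) := q(\cdot\,;x_k,H_k)$, together with the accuracy criterion \eqref{eq:errvek}. The key observation is that since $B_k\succeq 0$, the construction $H_k = B_k + (c+\mu_1\min\{1,\|\mathcal{G}(x_k)\|^{\delta}\})I$ yields $H_k \succeq cI$. Combined with convexity of $g$ and the fact that the other terms in $q_k$ contribute a linear piece in $x$, this makes $q_k$ a $c$-strongly convex function.

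Next I would write down the optimality conditions for the two points under comparison. By definition $\bar{x}_k$ is the exact minimizer, so $0\in\partial q_k(\bar{x}_k)$. The point $\hat{x}_k$ is defined in \eqref{eq:xsub} as the minimizer of $q_k(x)+\varepsilon_k^{\top}(x-x_k)$, so its first-order condition reads $-\varepsilon_k\in\partial q_k(\hat{x}_k)$, exactly as the authors already noted just below \eqref{eq:errvek}.

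Then I would invoke strong monotonicity of $\partial q_k$ (equivalently, the strong convexity inequality): for any $u\in\partial q_k(\hat{x}_k)$ and $v\in\partial q_k(\bar{x}_k)$,
\begin{equation*}
\langle u-v,\,\hat{x}_k-\bar{x}_k\rangle \;\geq\; c\,\|\hat{x}_k-\bar{x}_k\|^2.
\end{equation*}
Choosing $u=-\varepsilon_k$ and $v=0$ and applying Cauchy--Schwarz on the left-hand side gives $\|\varepsilon_k\|\,\|\hat{x}_k-\bar{x}_k\|\geq c\,\|\hat{x}_k-\bar{x}_k\|^2$, so
\begin{equation*}
\|\hat{x}_k-\bar{x}_k\|\;\leq\;\frac{\|\varepsilon_k\|}{c}.
\end{equation*}

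Finally, I would plug in the accuracy condition \eqref{eq:errvek}, namely $\|\varepsilon_k\|\leq \tfrac{\mu_2}{2}\min\{1,\|\mathcal{G}(x_k)\|^{\delta}\}\|d_k\|\leq \tfrac{\mu_2}{2}\|d_k\|$, and trivially bound $1\leq 1+\|H_k\|$ to recover the stated inequality \eqref{eq:errhbxk}. There is no real obstacle here: the argument is essentially a one-line application of strong convexity; the only thing to be careful about is correctly identifying which element of $\partial q_k(\hat{x}_k)$ the accuracy bound controls (namely $-\varepsilon_k$, not $\varepsilon_k$), but the sign does not affect the norm estimate. Note that the argument actually delivers the tighter bound $\|\hat{x}_k-\bar{x}_k\|\leq \tfrac{\mu_2}{2c}\min\{1,\|\mathcal{G}(x_k)\|^{\delta}\}\|d_k\|$; the factor $(1+\|H_k\|)$ in \eqref{eq:errhbxk} is slack that presumably matches the form needed in the subsequent superlinear-rate analysis.
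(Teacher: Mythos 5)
Your proof is correct, and it takes a genuinely different route from the paper's. The paper argues through the residual $r_k(\hat{x}_k)$ of the subproblem: it writes the two subgradient inclusions $-\nabla f(x_k)-H_k(\bar{x}_k-x_k)\in\partial g(\bar{x}_k)$ and $r_k(\hat{x}_k)-\nabla f(x_k)-H_k(\hat{x}_k-x_k)\in\partial g(\hat{x}_k-r_k(\hat{x}_k))$, applies monotonicity of $\partial g$ at the \emph{shifted} point $\hat{x}_k-r_k(\hat{x}_k)$, and then uses $H_k\succeq cI$ together with $\|r_k(\hat{x}_k)\|\le\|\ve_k\|$ from \eqref{eq:evare}. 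The factor $(1+\|H_k\|)$ in \eqref{eq:errhbxk} is not slack added for cosmetic reasons: it arises because the cross terms $\langle \hat{x}_k-\bar{x}_k, r_k(\hat{x}_k)\rangle$ and $\langle H_k(\hat{x}_k-\bar{x}_k), r_k(\hat{x}_k)\rangle$ both survive when the inclusion is written at the shifted point. Your argument instead compares the two minimizers directly via the $c$-strong monotonicity of $\partial q_k$, using $0\in\partial q_k(\bar{x}_k)$ and $-\ve_k\in\partial q_k(\hat{x}_k)$, which is shorter and yields the strictly sharper bound $\|\hat{x}_k-\bar{x}_k\|\le \|\ve_k\|/c\le \tfrac{\mu_2}{2c}\min\{1,\|\cG(x_k)\|^{\delta}\}\|d_k\|$; the stated inequality then follows trivially from $1\le 1+\|H_k\|$. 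A side benefit worth noting: if your sharper estimate were carried into Lemma~\ref{lemma:45}, the denominator $\tilde{\tilde{\eta}}$ there would improve and the hypothesis $c>\frac{\mu_2(1+2L_H+\mu_1)}{2-\mu_2}$ could be relaxed, since the term $\mu_2(1+\|H_k\|)/(2c)$ would be replaced by $\mu_2/(2c)$.
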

\begin{proof}
By the definition of \(\bar{x}_k\) and using the first-order optimality condition, we have 
\begin{equation}\label{eq:optbarx}
-\nabla f(x_k) - H_k(\bar{x}_k - x_k) \in\partial g(\bar{x}_k). 
\end{equation}
Combining with~\eqref{eq:ek2}, using the monotonicity of \(\partial g\), we have 
\begin{align*}
0 \leq& \langle \hat{x}_k - \bar{x}_k - r_k(\hat{x}_k), r_k(\hat{x}_k) - H_k(\hat{x}_k - \bar{x}_k)\rangle\\
\leq & \langle (\hat{x}_k - \bar{x}_k) + H_k(\hat{x}_k - \bar{x}_k), r_k(\hat{x}_k)\rangle -  \langle \hat{x}_k - \bar{x}_k, H_k( \hat{x}_k - \bar{x}_k)\rangle. 
\end{align*} 
Notice that from the definition of \(H_k\), we have \(H_k \succeq c I\), that is, \(H_k - cI\) is positive semidefinite. Therefore, the above inequality implies that
\[
c\|\hat{x}_k - \bar{x}_k\| \leq (1 + \|H_k\|)\|r_k(\hat{x}_k)\| \leq \frac{\mu_2}{2}(1 + \|H_k\|)\|d_k\|,
\]
where the last inequality follows from~\eqref{eq:evare} and~\eqref{eq:errvek}. 
The statement holds. \qed
\end{proof}
Next, we estimate the error bound between \(x_k\) and \(\bar{x}_k\) in terms of \({\rm dist}(x_k, \mathcal{S}^*)\). 

\begin{lemma}\label{lemma:43}
Consider any \(\bar{x}\in\omega(x_0)\). Suppose that Assumption~\ref{assume:g} holds. Then, for all \(x_k \in \mathbb{B}(\bar{x}, \epsilon_0/2)\) with \(\epsilon_0\) defined in Assumption~\ref{assume:g} (i),  
\[
\|x_k - \bar{x}_k\| \leq \frac{L_C}{2c}{\rm dist}^2(x_k, \mathcal{S}^*) + \frac{1}{c}([-\lambda_{\min}(\nabla^2f(x_k))]_+ + 2c + \mu_1){\rm dist}(x_k, \mathcal{S}^*).
\]
\end{lemma}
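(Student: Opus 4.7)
The plan is to reduce the bound on $\|x_k-\bar x_k\|$ to a bound on $\|\bar x_k - x_k^*\|$ for a closest stationary point $x_k^*$ to $x_k$, then use the first-order optimality of $\bar x_k$ together with monotonicity of $\partial g$, and finally absorb the discrepancy between $\nabla f$ and its first-order Taylor model via the $L_C$-Lipschitz continuity of $\nabla^2 f$.

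Concretely, let $x_k^*\in\mathcal{S}^*$ satisfy $\|x_k-x_k^*\|=\mathrm{dist}(x_k,\mathcal{S}^*)$; when $x_k\in\mathbb{B}(\bar x,\epsilon_0/2)$, such a projection exists (since $\mathcal{S}^*$ is closed) and lies in the neighborhood where Assumption~\ref{assume:g}(i) gives the $L_C$-Lipschitz estimate on $\nabla^2 f$. Stationarity of $x_k^*$ yields $-\nabla f(x_k^*)\in\partial g(x_k^*)$, while the first-order condition \eqref{eq:optbarx} gives $-\nabla f(x_k)-H_k(\bar x_k-x_k)\in\partial g(\bar x_k)$. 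The monotonicity of $\partial g$ then produces
\[
\langle H_k(\bar x_k-x_k),\,\bar x_k-x_k^*\rangle \;\leq\; \langle \nabla f(x_k^*)-\nabla f(x_k),\,\bar x_k-x_k^*\rangle.
\]
I would next split $\bar x_k-x_k=(\bar x_k-x_k^*)+(x_k^*-x_k)$ on the left side and use $H_k\succeq cI$ to absorb the quadratic term, arriving at
\[
c\|\bar x_k-x_k^*\|^2 \leq \big\langle \nabla f(x_k^*)-\nabla f(x_k)-H_k(x_k^*-x_k),\,\bar x_k-x_k^*\big\rangle,
\]
and cancel one factor of $\|\bar x_k-x_k^*\|$ via Cauchy--Schwarz.

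The remaining task is to estimate $\|\nabla f(x_k^*)-\nabla f(x_k)-H_k(x_k^*-x_k)\|$. Writing $H_k=\nabla^2 f(x_k)+\rho_k I$ with $\rho_k=[-\lambda_{\min}(\nabla^2 f(x_k))]_++c+\mu_1\min\{1,\|\mathcal{G}(x_k)\|^\delta\}$, the triangle inequality splits this into the Taylor residual $\|\nabla f(x_k^*)-\nabla f(x_k)-\nabla^2 f(x_k)(x_k^*-x_k)\|$ and the scalar-multiple term $\rho_k\|x_k^*-x_k\|$. The Lipschitz continuity of $\nabla^2 f$ bounds the first by $\tfrac{L_C}{2}\|x_k^*-x_k\|^2$ (standard integral form of Taylor's theorem), and $\rho_k\leq [-\lambda_{\min}(\nabla^2 f(x_k))]_++c+\mu_1$ bounds the second. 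Dividing by $c$ gives
\[
\|\bar x_k-x_k^*\|\leq \tfrac{L_C}{2c}\mathrm{dist}^2(x_k,\mathcal{S}^*)+\tfrac{1}{c}\big([-\lambda_{\min}(\nabla^2 f(x_k))]_++c+\mu_1\big)\mathrm{dist}(x_k,\mathcal{S}^*).
\]
A final triangle inequality $\|x_k-\bar x_k\|\leq \|x_k-x_k^*\|+\|x_k^*-\bar x_k\|$ adds the extra $\mathrm{dist}(x_k,\mathcal{S}^*)$ that turns the coefficient $c+\mu_1$ into $2c+\mu_1$, matching the stated bound.

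The only delicate point is ensuring $x_k^*$ actually lies in the region where the $L_C$-Lipschitz estimate on $\nabla^2 f$ applies; the hypothesis $x_k\in\mathbb{B}(\bar x,\epsilon_0/2)$ together with $\mathrm{dist}(x_k,\mathcal{S}^*)\leq \|x_k-\bar x\|\leq \epsilon_0/2$ places $x_k^*$ within the $\epsilon_0$-neighborhood of $\omega(x_0)$ (since $\bar x\in\omega(x_0)\subseteq\mathcal{S}^*$), which is where Assumption~\ref{assume:g}(i) supplies the required Lipschitz bound. The rest of the argument is a routine chain of monotonicity, strong positive-definiteness, Cauchy--Schwarz, and Taylor's theorem.
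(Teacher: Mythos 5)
Your proposal is correct and follows essentially the same route as the paper: project $x_k$ onto $\mathcal{S}^*$, combine the stationarity of the projection with the optimality condition \eqref{eq:optbarx} via monotonicity of $\partial g$, use $H_k\succeq cI$ to cancel a factor of $\|\bar{x}_k-x_{k,*}\|$, bound the residual $\nabla f(x_{k,*})-\nabla f(x_k)-H_k(x_{k,*}-x_k)$ by the Taylor term $\tfrac{L_C}{2}\|x_{k,*}-x_k\|^2$ plus the regularization term, and finish with a triangle inequality. The only cosmetic difference is that the paper keeps the whole residual inside a single integral $\int_0^1[H_k-\nabla^2f(x_k+t(x_{k,*}-x_k))](x_{k,*}-x_k)\,dt$ before splitting, whereas you split into the Taylor remainder and the $\rho_kI$ term first; the computations are identical.
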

\begin{proof}
For any \(x_k\in\mathbb{B}(\bar{x}, \epsilon_0/2)\), let \(\Pi_{\mathcal{S}^*}(x_k) \triangleq \{x\in\mathcal{S}^*\vert {\rm dist}(x, x_k) \leq {\rm dist}(z, x_k),~\forall z\in\mathcal{S}^*\}\) be the projection set of \(x_k\) onto \(\mathcal{S^*}\). Then \(\Pi_{\mathcal{S}^*}(x_k) \neq \emptyset\) since \(\mathcal{S}^*\) is closed. Pick \(x_{k, *}\in \Pi_{\mathcal{S}^*}(x_k)\). Notice that \(\bar{x} \in\omega(x_0)\subseteq \mathcal{S}^*\), we have 
\[
\|x_{k, *} -\bar{x}\| \leq \|x_{k, *} - x_k\| + \|x_k - \bar{x}\| \leq 2\|x_k - \bar{x}\| \leq \epsilon_0,
\]
which implies that \(x_{k, *}\in\mathbb{B}(\bar{x}, \epsilon_0)\). Hence, \((1- t)x_k + tx_{k, *}\in\mathbb{B}(\bar{x}, \epsilon_0)\cap {\rm dom}(g)\) for all \(t\in[0,1]\). Notice that \(x_{k, *} \in\mathcal{S}^*\), we have \(-\nabla f(x_{k, *}) \in \partial g(x_{k, *})\). Combine with~\eqref{eq:optbarx}, using the monotonicity of \(\partial g\), we have 
\[
0 \leq \langle x_{k, *} - \bar{x}_k, -\nabla f(x_{k, *}) + \nabla f(x_k) + H_k(x_{k, *} - x_k)\rangle + \langle x_{k, *} - \bar{x}_k, H_k(\bar{x}_k - x_{k, *})\rangle.
\]
Using \(H_k \succeq cI\) again, we have
\begin{align*}
\|\bar{x}_k - x_{k, *}\| \leq& \frac{1}{c}\| \nabla f(x_k) -\nabla f(x_{k, *})  + H_k(x_{k, *} - x_k)\|\\
=& \frac{1}{c}\|\int_0^1[H_k - \nabla^2f(x_k + t(x_{k,*} - x_k))](x_{k, *} - x_k)dt\|\\
\leq & \frac{1}{c}\|\int_0^1[\nabla^2f(x_k) - \nabla^2f(x_k + t(x_{k,*} - x_k))](x_{k, *} - x_k)dt\| \\
&+\frac{1}{c} ([-\lambda_{\min}(\nabla^2f(x_k))]_+ + c + \mu_1\min\{1, \|\mathcal{G}(x_k)\|^{\delta}\})\|x_{k, *} - x_k\|\\
\leq & \frac{L_C}{2c}\|x_{k, *} - x_k\|^2 + \frac{1}{c}([-\lambda_{\min}(\nabla^2f(x_k))]_+ + c + \mu_1)\|x_{k, *} - x_k\|, 
\end{align*}
where the second inequality follows from the definition of \(H_k\) and the last inequality follows from Assumption~\ref{assume:g} (i). 
Therefore, 
\begin{align*}
\|x_k - \bar{x}_k\| \leq& \|x_k - x_{k, *}\| + \|x_{k, *} - \bar{x}_k\|\\
\leq& \frac{L_C}{2c}\|x_{k, *} - x_k\|^2 + \frac{1}{c}([-\lambda_{\min}(\nabla^2f(x_k))]_+ + 2c + \mu_1)\|x_{k, *} - x_k\|.
\end{align*}
The statement holds. \qed
\end{proof}

The following lemma shows that the unit step-size will occur when the iterates are close enough to a cluster point. 

\begin{lemma}\label{lemma:45}
Suppose that Assumption~\ref{assume:g} holds. Let \(\{x_k\}\) be the sequence generated by Algorithm~\ref{alg:pnewton} with \(c > \frac{\mu_2(1 + 2L_H + \mu_1)}{2 - \mu_2}\). 
For any \(\bar{x}\in\omega(x_0)\), there exists \(\bar{k}\in\mathbb{N}\) such that 
\[
x_{k+1} = \hat{x}_k, \quad {\rm for~all}~x_k\in\mathbb{B}(\bar{x}, \epsilon_0/2)~{\rm with}~k \geq \bar{k}, 
\]
where \(\epsilon_0\) is given in Assumption~\ref{assume:g}. 
\end{lemma}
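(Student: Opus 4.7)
The plan is to show that for $k$ sufficiently large and $x_k$ close enough to $\bar x$, the full Newton step $\hat x_k$ automatically satisfies the backtracking condition \eqref{eq:ls} with $\theta^{j_k}=1$. The key identity the argument will pivot on is the quadratic model error: since $\nabla^2 f$ is Lipschitz on a neighbourhood of $\omega(x_0)$ by Assumption~\ref{assume:g}(i), the standard cubic remainder bound
\[
f(\hat x_k)\leq f(x_k)+\nabla f(x_k)^\top d_k+\tfrac12 d_k^\top\nabla^2 f(x_k)d_k+\tfrac{L_C}{6}\|d_k\|^3
\]
combined with $\varphi(\hat x_k)=f(\hat x_k)+g(\hat x_k)$ and the identity $q_k(\hat x_k)=l_k(\hat x_k;x_k)+\tfrac12 d_k^\top H_k d_k$ yields
\[
\varphi(\hat x_k)\leq q_k(\hat x_k)-\tfrac12\bigl([-\lambda_{\min}(\nabla^2 f(x_k))]_++c+\mu_1\min\{1,\|\mathcal G(x_k)\|^\delta\}\bigr)\|d_k\|^2+\tfrac{L_C}{6}\|d_k\|^3,
\]
because $H_k-\nabla^2 f(x_k)$ equals exactly the scalar multiple of $I$ appearing in parentheses.

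I would then feed in Lemma~\ref{lemma:dqkng} to replace $q_k(\hat x_k)$ by $\varphi(x_k)+\tfrac{\mu_2}{2}\min\{1,\|\mathcal G(x_k)\|^\delta\}\|d_k\|^2$, so that the $\mu_1$ and $\mu_2$ contributions combine into a non-positive term thanks to $\mu_2\leq\mu_1$, and the $[-\lambda_{\min}]_+$ contribution drops out because it is non-negative. What remains is the clean inequality
\[
\varphi(\hat x_k)-\varphi(x_k)\leq -\tfrac{c}{2}\|d_k\|^2+\tfrac{L_C}{6}\|d_k\|^3
=\Bigl(-\tfrac{c}{2}+\tfrac{L_C}{6}\|d_k\|\Bigr)\|d_k\|^2.
\]
Because $\tau<c$ by the algorithm's parameter choice, this quantity is bounded above by $-\tfrac{\tau}{2}\|d_k\|^2$ as soon as $\|d_k\|\leq 3(c-\tau)/L_C$, which is exactly the unit-step-size form of \eqref{eq:ls}.

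The remaining and main obstacle is to show that $\|d_k\|\to 0$ along the indices with $x_k\in\mathbb B(\bar x,\epsilon_0/2)$, so that the smallness condition above is activated. For this I would combine Lemma~\ref{lem:42} and Lemma~\ref{lemma:43} through the triangle inequality $\|d_k\|\leq\|\hat x_k-\bar x_k\|+\|\bar x_k-x_k\|$. Bounding $\|H_k\|\leq 2L_H+c+\mu_1$ via Assumption~\ref{assume:g}(i) and Lemma~\ref{lem:42} gives
\[
\|d_k\|\Bigl(1-\tfrac{\mu_2(1+2L_H+c+\mu_1)}{2c}\Bigr)\leq\|\bar x_k-x_k\|,
\]
and the parameter condition $c>\mu_2(1+2L_H+\mu_1)/(2-\mu_2)$ is precisely what forces the parenthetical factor to be a strictly positive constant, giving $\|d_k\|\leq C\|\bar x_k-x_k\|$.

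To finish, I would invoke Lemma~\ref{lemma:43} and observe that the right-hand side there is bounded by a constant times $\operatorname{dist}(x_k,\mathcal S^*)$, which tends to $0$ whenever $x_k\to\bar x\in\mathcal S^*$ (a fact already used in Theorem~\ref{th:limitsppn}(a) through $\|\mathcal G(x_k)\|\to 0$ and the closedness of $\mathcal S^*$). Hence $\|d_k\|\to 0$ along the relevant indices, and from some threshold $\bar k$ onwards every $x_k\in\mathbb B(\bar x,\epsilon_0/2)$ satisfies $\|d_k\|\leq 3(c-\tau)/L_C$, so the backtracking loop exits at $j_k=0$ and $x_{k+1}=\hat x_k$. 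The delicate point I expect to have to handle with care is the bookkeeping around the assumption $c>\mu_2(1+2L_H+\mu_1)/(2-\mu_2)$: this is exactly the threshold at which the $H_k$-dependent coefficient in the Lemma~\ref{lem:42} bound becomes a contraction, and without it one cannot absorb the $\hat x_k-\bar x_k$ error into a multiple of $\|\bar x_k-x_k\|$ and the argument breaks.
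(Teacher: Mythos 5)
Your proposal is correct and follows essentially the same route as the paper: the same combination of Lemmas~\ref{lem:42} and~\ref{lemma:43} (with the threshold on \(c\) making \(1-\tfrac{\mu_2}{2c}(1+\|H_k\|)\) positive) to get \(\|d_k\|=\mathcal{O}({\rm dist}(x_k,\mathcal{S}^*))\to 0\), and the same cubic-remainder estimate combined with Lemma~\ref{lemma:dqkng} yielding \(\varphi(\hat x_k)-\varphi(x_k)\le -\tfrac{c}{2}\|d_k\|^2+\tfrac{L_C}{6}\|d_k\|^3\), hence acceptance of the unit step once \(\|d_k\|\le 3(c-\tau)/L_C\). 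The only bookkeeping item to add is that \(\bar k\) should also enforce \(\|d_k\|\le \epsilon_0/2\) so the segment \([x_k,\hat x_k]\) stays inside the neighborhood where \(\nabla^2 f\) is \(L_C\)-Lipschitz, exactly as the paper does.
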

\begin{proof}
By invoking Lemmas~\ref{lem:42} and \ref{lemma:43}, \([-\lambda_{\min}(\nabla^2f(x_k))]_+ \leq L_H\), and \(\|H_k\| \leq 2L_H + \mu_1 + c\), for all \(x_k\in\mathbb{B}(\bar{x}, \epsilon_0/2)\), we have
\begin{align*}
&\|d_k\| = \|\hat{x}_k - x_k\| \leq \|\hat{x}_k - \bar{x}_k\| + \|\bar{x}_k - x_k\|\\
\leq& \frac{\mu_2}{2c}(1 \!+\! \|H_k\|)\|d_k\| \!+\!\! \frac{L_C}{2c}\!{\rm dist}^2(x_k, \mathcal{S}^*\!) \!+\! \frac{1}{c}([-\!\lambda_{\min}(\nabla^2f(x_k))]_+ \!+\! 2c \!+\! \mu_1){\rm dist}(x_k, \mathcal{S}^*\!)\\
\leq & \frac{\mu_2}{2c}(1 + 2L_H + \mu_1 + c)\|d_k\| + \frac{L_C}{2c}{\rm dist}^2(x_k, \mathcal{S}^*) + \frac{1}{c}(L_H + 2c + \mu_1){\rm dist}(x_k, \mathcal{S}^*).
\end{align*}
Notice that  the above inequality implies that
\begin{equation}\label{eq:ndk}
\|d_k\| \leq \frac{L_C}{\tilde{\tilde{\eta}}}{\rm dist}^2(x_k, \mathcal{S}^*) + \frac{2(L_H + 2c + \mu_1)}{\tilde{\tilde{\eta}}}{\rm dist}(x_k, \mathcal{S}^*),
\end{equation}
where \(\tilde{\tilde{\eta}} = 2c - \mu_2(c + 1 + \mu_1 + 2L_H) > 0\). Therefore, \(\|d_k\| = \mathcal{O}({\rm dist}(x_k, \mathcal{S}^*))\), which implies that there exists \(\bar{k} \in \mathbb{N}\), such that \(\|d_k\| \leq {\min\{\frac{3(c - \tau)}{L_C}, \frac{\epsilon_0}{2}\}}\) for all \(k \geq \bar{k}\). Notice that \(\|x_k + d_k - \bar{x}\| \leq \|x_k - \bar{x}\| + \|d_k\| < \frac{\epsilon_0}{2} + \frac{\epsilon_0}{2}  = \epsilon_0\), combing with \(\bar{x}\in\omega(x_0)\), we know that \(x_k + d_k\) belongs to the open neighborhood of \(\omega(x_0)\) with radius \(\epsilon_0\).

Notice that 
\begin{align*}
&\varphi(x_k + \theta^jd_k) - \varphi(x_k)\\
=& f(x_k + \theta^j d_k) - f(x_k) + g(x_k + \theta^jd_k) - g(x_k) \\
\leq & \int_0^1\theta^j\langle \nabla f(x_k + t\theta^jd_k) - \nabla f(x_k), d_k\rangle dt + \theta^j\langle\nabla f(x_k), d_k\rangle + \theta^j[g(\hat{x}_k) - g(x_k)]\\
=&\int_0^1\int_0^1\!\!t\theta^{2j}\langle d_k, [\nabla^2f(x_k + st\theta^jd_k) - \nabla^2f(x_k)]d_k\rangle dsdt \\
&+\!\!\int_0^1\! \!t\theta^{2j}\langle d_k, \nabla^2f(x_k)d_k\rangle dt \!+\! \theta^j\langle\nabla f(x_k), d_k\rangle \!+\!\theta^j[l_k(\hat{x}_k)\!-\! l_k(x_k) \!-\! \langle \nabla f(x_k), d_k\rangle]\\
\leq& \frac{L_C}{6}\theta^{3j}\|d_k\|^3 + \frac{1}{2}\theta^{2j}\langle d_k, \nabla^2f(x_k)d_k\rangle +\theta^j[l_k(\hat{x}_k) - l_k(x_k) ]\\
\leq& \frac{L_C}{6}\theta^{j}\|d_k\|^3  + \frac{1}{2}\theta^{j}\langle d_k, (\nabla^2f(x_k) + [-\lambda_{\min}(\nabla^2f(x_k))]_+I)d_k\rangle\\
& + \frac{\theta^j}{2}(\mu_2\min\{1, \|\mathcal{G}(x_k)\|^\delta\}\|d_k\|^2 - d_k^\top H_kd_k)\\
=& \frac{L_C}{6}\theta^{j}\|d_k\|^3 - \frac{1}{2}\theta^j(c + (\mu_1 - \mu_2)\min\{1, \|\mathcal{G}(x_k)\|^{\delta}\})\|d_k\|^2, 
\end{align*}
where the first inequality follows from the convexity of \(g\), the second inequality follows from Assumption~\ref{assume:g} (i), the last inequality follows from~\eqref{eq:dlk}, and \(\theta\in(0,1)\), and the last equality follows from the definition of \(H_k\). Hence, 
\begin{align*}
&\varphi(x_k + \theta^jd_k) - \varphi(x_k) + \frac{\tau\theta^j}{2}\|d_k\|^2 \\
\leq& \frac{L_C}{6}\theta^{j}\|d_k\|^3  + \frac{\theta^{j}}{2}((\mu_2-\mu_1)\min\{1, \|\mathcal{G}(x_k)\|^{\delta}\} - c + \tau)\|d_k\|^2  \\
\leq& -\frac{\theta^j}{2}\|d_k\|^3(\frac{c - \tau}{\|d_k\|} - \frac{L_C}{3}).
\end{align*}
Recall that \(\frac{c - \tau}{\|d_k\|} - \frac{L_C}{3} \geq 0\) for all \(k \geq \bar{k}\) and \(x_k\in\mathbb{B}(\bar{x}, \epsilon_0/2)\). Therefore, for any \(k \geq \bar{k}\) with \(x_k\in\mathbb{B}(\bar{x}, \epsilon_0/2)\), line search condition~\eqref{eq:ls} holds with \(j = 0\). The statement holds.  \qed
\end{proof}

\begin{theorem}\label{th:lcrxk}
Suppose that Assumptions~\ref{assume:errorbound} and~\ref{assume:g} hold. Let \(\{x_k\}\) be the sequence generated by Algorithm~\ref{alg:pnewton} with \(c > \frac{\mu_2(1 + 2L_H + \mu_1)}{2 - \mu_2}\). 
Then for any \(\bar{x}\in\omega(x_0)\), \(\{x_k\}\) converges to \(\bar{x}\) with the Q-superlinear convergence rate of order \(q\). 
\end{theorem}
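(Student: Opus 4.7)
The plan has three parts: derive a one-step superlinear recursion for $a_k:={\rm dist}(x_k,\mathcal{S}^*)$, carry out a ``trapping'' argument so the recursion iterates validly for all $k$ past some $k_0$, and then extract the stated Q-superlinear rate on $\|x_k-\bar{x}\|$ by a summability argument. The two main ingredients that are already available are \eqref{eq:ndk} from the proof of Lemma~\ref{lemma:45}, giving $\|d_k\|\le C_0\,a_k$ for $x_k$ close enough to $\bar{x}$, and Lemma~\ref{lemma:45} itself, which forces $x_{k+1}=\hat{x}_k$ on the same small neighborhood.

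The crux of the analytic step is to bound $\|\mathcal{G}(\hat{x}_k)\|$ by $\|d_k\|$. The first-order optimality of $\hat{x}_k$ gives $\hat{x}_k={\rm prox}_g(\hat{x}_k-\nabla f(x_k)-H_kd_k-\varepsilon_k)$, while by definition $\hat{x}_k-\mathcal{G}(\hat{x}_k)={\rm prox}_g(\hat{x}_k-\nabla f(\hat{x}_k))$. Nonexpansiveness of ${\rm prox}_g$ then gives
\[
\|\mathcal{G}(\hat{x}_k)\|\le \|\nabla f(\hat{x}_k)-\nabla f(x_k)-H_kd_k\|+\|\varepsilon_k\|.
\]
Writing $H_k=\nabla^2f(x_k)+\sigma_k I$ with $\sigma_k\le L_H+c+\mu_1$ (using $[-\lambda_{\min}(\nabla^2f(x_k))]_+\le L_H$), a first-order Taylor expansion with the $L_C$-Lipschitz Hessian of Assumption~\ref{assume:g}~(i), together with the accuracy condition~\eqref{eq:errvek}, gives $\|\mathcal{G}(\hat{x}_k)\|\le \tfrac{L_C}{2}\|d_k\|^2+C_1\|d_k\|\le C_2\|d_k\|$ once $\|d_k\|$ is bounded. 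Combined with $\|d_k\|\le C_0\,a_k$ and the metric $q$-subregularity at $x_{k+1}=\hat{x}_k$ (Assumption~\ref{assume:errorbound}), this yields the one-step recursion
\[
a_{k+1}\le \kappa\|\mathcal{G}(x_{k+1})\|^q\le \kappa(C_2C_0)^q\,a_k^q,
\]
which is strictly superlinear because $q>1$.

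Next I will perform a trapping argument. Because $\bar{x}\in\omega(x_0)$ and $\|\mathcal{G}(x_k)\|\to0$ by Theorem~\ref{th:limitsppn}, a subsequence $x_{k_j}\to\bar{x}$ and $a_{k_j}\to0$. Pick $\rho\le\tfrac12\min\{\epsilon_0/2,\epsilon\}$ with $\epsilon_0$ from Assumption~\ref{assume:g}~(i) and $\epsilon$ the subregularity radius, and choose $k_0=k_j$ so large that $x_{k_0}\in\mathbb{B}(\bar{x},\rho)$ and $\kappa(C_2C_0)^q a_{k_0}^{q-1}\le \tfrac12$. Then an induction combining the halving $a_{k+1}\le \tfrac12 a_k$ with $\|x_{k+1}-x_k\|\le C_0 a_k$ keeps every $x_k$ for $k\ge k_0$ inside $\mathbb{B}(\bar{x},\rho+2C_0 a_{k_0})\subseteq \mathbb{B}(\bar{x},\min\{\epsilon_0/2,\epsilon\})$, so both Lemma~\ref{lemma:45} and Assumption~\ref{assume:errorbound} remain in force at every step, and the recursion iterates legitimately.

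Finally, to convert to a rate on $\|x_k-\bar{x}\|$, the superlinear recursion gives $\sum_{k\ge k_0}a_k\le 2a_{k_0}<\infty$, and $\|x_{k+1}-x_k\|\le C_0 a_k$ then makes $\{x_k\}$ Cauchy with limit in $\omega(x_0)$; that limit must be $\bar{x}$ since $\bar{x}$ is a cluster point. Summing tails yields $\|x_k-\bar{x}\|\le 2C_0 a_k$ for $k\ge k_0$, and combining with the trivial $a_k\le \|x_k-\bar{x}\|$ transfers the recursion to
\[
\|x_{k+1}-\bar{x}\|\le 2C_0\, a_{k+1}\le 2C_0\kappa(C_2C_0)^q\, a_k^q\le 2C_0\kappa(C_2C_0)^q\,\|x_k-\bar{x}\|^q,
\]
i.e., Q-superlinear convergence of order $q$. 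The step I expect to be the most delicate is the bookkeeping in the trapping argument, since two different radii ($\epsilon_0/2$ for Lemma~\ref{lemma:45}, $\epsilon$ for the subregularity) must simultaneously contain every future iterate while the recursion itself shrinks the residual; starting from a subsequential index $k_0$ deep enough inside the smaller of the two balls, with $a_{k_0}$ small enough that the geometric tail $2C_0 a_{k_0}$ is far below both radii, handles this cleanly.
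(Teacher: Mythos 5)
Your proposal is correct and follows essentially the same route as the paper's own proof: unit steps from Lemma~\ref{lemma:45} plus \eqref{eq:ndk}, the bound \(\|\mathcal{G}(\hat{x}_k)\|\le\|\nabla f(\hat{x}_k)-\nabla f(x_k)-H_kd_k\|+\|\ve_k\|=\mathcal{O}(\|d_k\|)\) via prox nonexpansiveness and the Lipschitz Hessian, the \(q\)-subregularity recursion \({\rm dist}(x_{k+1},\mathcal{S}^*)=\mathcal{O}({\rm dist}^q(x_k,\mathcal{S}^*))\), and the same trapping/Cauchy/tail-summation argument to transfer the rate to \(\|x_k-\bar{x}\|\). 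The only cosmetic difference is that the paper routes the residual bound through \(r_k(x_{k+1})\) and uses a generic contraction factor \(c_2\in(0,1)\) where you fix \(c_2=1/2\).
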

\begin{proof}
Notice that under Assumption~\ref{assume:errorbound} and the fact \(\lim_{k\to\infty}\|\mathcal{G}(x_k)\| = 0\), the statement of Lemma~\ref{lemma:45}, and \eqref{eq:ndk}, there exists \(\hat{k}\in \mathbb{N}\), such that for all \(k \geq \hat{k}\), \(\|\mathcal{G}(x_k)\| \leq 1\), \(\|d_k\| \leq c_1{\rm dist}(x_k, \mathcal{S}^*)\) for some \(c_1 > 0\), and \(x_{k+1} = \hat{x}_k\) if \(x_k \in \mathbb{B}(\bar{x}, \epsilon_1)\) with \(\epsilon_1 = \min\{\epsilon, \epsilon_0/2\}\). 

We first show that for all \(k \geq \hat{k}\), if \(x_k \in \mathbb{B}(\bar{x}, \epsilon_1)\) and \(x_{k+1} = \hat{x}_k \in \mathbb{B}(\bar{x}, \epsilon_1)\), then 
\begin{equation}\label{eq:distx}
{\rm dist}(x_{k+1}, \mathcal{S}^*) = o({\rm dist}(x_k, \mathcal{S}^*)). 
\end{equation}
From Assumption~\ref{assume:errorbound}, for \(x_{k+1} = \hat{x}_k\), we have
\begin{align*}
{\rm dist}(x_{k+1}, \mathcal{S}^*) \leq& \kappa [\|\mathcal{G}(x_{k+1})\| - \|r_k(x_{k+1})\| + \|r_k(\hat{x}_k)\|]^q\\
\leq& \kappa [\|\mathcal{G}(x_{k+1})\| - \|r_k(x_{k+1})\| + \mu_2\|d_k\|]^q\\
\leq& \kappa [\|\mathcal{G}(x_{k+1})\| - \|r_k(x_{k+1})\| + \mu_2c_1{\rm dist}(x_k, \mathcal{S}^*)]^q,
\end{align*}
where the second inequality follows from~\eqref{eq:evare} and~\eqref{eq:errvek}. 
Notice that 
\begin{align*}
&\|\mathcal{G}(x_{k+1})\| - \|r_k(x_{k+1})\| \leq \|\mathcal{G}(x_{k+1}) - r_k(x_{k+1})\|\\
=& \|{\rm prox}_g(x_{k+1} - (\nabla f(x_k) + H_k(x_{k+1} - x_k))) - {\rm prox}_g(x_{k+1} - \nabla f(x_{k+1}))\|\\
\leq& \|\nabla f(x_{k+1}) - \nabla f(x_k) - H_k(x_{k+1} - x_k)\|\\
\leq& \|\int_0^1[\nabla^2f(x_k + t(x_{k+1} - x_k)) - \nabla^2f(x_k)](x_{k+1} - x_k)dt\| \\
&+ ([-\lambda_{\min}(\nabla^2f(x_k))]_+ + c + \mu_1)\|x_{k+1} - x_k\|\\
\leq& \frac{L_C}{2}\|d_k\|^2 +  (L_H + c + \mu_1)\|d_k\|\\
\leq& [\frac{c_1^2L_C}{2}{\rm dist}(x_k, \mathcal{S}^*) + c_1(L_H + c + \mu_1)]{\rm dist}(x_k, \mathcal{S}^*),
\end{align*}
where first inequality follows from the nonexpansivity of \({\rm Prox}_g\) and the third inequality follows from Assumption~\ref{assume:g} (i). 
Therefore, denote \(\tilde{c}_1 = L_H + c + \mu_1 + \mu_2\), we have 
\begin{align}\label{eq:xks}
{\rm dist}(x_{k+1}, \mathcal{S}^*) \leq&\kappa \left[\frac{c_1^2L_C}{2}{\rm dist}^2(x_k, \mathcal{S}^*) + c_1\tilde{c}_1{\rm dist}(x_k, \mathcal{S}^*)\right]^q,
\end{align}
which implies~\eqref{eq:distx} holds since \(q > 1\) and \(\lim_{k\to\infty}{\rm dist}(x_k, \mathcal{S}^*) = 0\). Therefore, for any \(c_2\in(0, 1)\), there exist \(0 < \epsilon_2 < \epsilon_1\) and \(\tilde{k} \geq \hat{k}\), such that for all \(k \geq \tilde{k}\), if \(x_k\in\mathbb{B}(\bar{x}, \epsilon_2)\) and \(x_{k+1} = \hat{x}_k\in\mathbb{B}(\bar{x}, \epsilon_2)\), then we have
\begin{equation}\label{eq:distkk}
{\rm dist}(x_{k+1}, \mathcal{S}^*) \leq c_2{\rm dist}(x_k, \mathcal{S}^*).
\end{equation}
Let \(\bar{\epsilon} := \min\{\frac{\epsilon_2}{2}, \frac{(1 - c_2)\epsilon_2}{2c_1}\}\). Next, we show that if \(x_{k_0}\in\mathbb{B}(\bar{x}, \bar{\epsilon})\) for some \(k_0 \geq \tilde{k}\), then \(x_{k_0+1} = \hat{x}_{k_0}\in \mathbb{B}(\bar{x}, \epsilon_2)\) for all \(k \geq k_0\) by induction. 

Notice that \(\bar{x} \in\omega(x_0)\), there exists \(k_0 \geq \tilde{k}\), such that \(x_{k_0}\in\mathbb{B}(\bar{x}, \bar{\epsilon})\). Therefore, 
\begin{align*}
\|\hat{x}_{k_0} - \bar{x}\| \leq& \|x_{k_0} - \bar{x}\| + \|x_{k_0} - \hat{x}_{k_0}\| = \|x_{k_0} - \bar{x}\| + \|d_{k_0}\| \\
\leq& \|x_{k_0} - \bar{x}\| + c_1{\rm dist}(x_{k_0}, \mathcal{S}^*) \leq \|x_{k_0} - \bar{x}\| + c_1\|x_{k_0} - \bar{x}\| \\
\leq& (1 + c_1)\bar{\epsilon}\leq \epsilon_2. 
\end{align*}
By Lemma~\ref{lemma:45}, we have \(x_{k_0 + 1} = \hat{x}_{k_0}\) and \(x_{k_0 + 1} \in\mathbb{B}(\bar{x}, \epsilon_2)\). 

For any \(k > k_0\), suppose that for all \(k_0\leq l\leq k-1\), we have \(x_{l+1} = \hat{x}_l\in\mathbb{B}(\bar{x}, \epsilon_2)\). Then, 
\begin{align*}
\|x_{k+1} - x_{k_0}\| \leq& \sum_{l=k_0}^k\|d_l\| \leq c_1\sum_{l=k_0}^k{\rm dist}(x_l, \mathcal{S}^*) \overset{\eqref{eq:distkk}}{\leq} c_1\sum_{l=k_0}^kc_2^{l-k_0}{\rm dist}(x_{k_0}, \mathcal{S}^*)\\
\leq& \frac{c_1}{1 - c_2}\|x_{k_0} - \bar{x}\|. 
\end{align*}
Therefore, \(\|x_{k+1} - \bar{x}\| \leq \|x_{k+1} - x_{k_0}\| + \|x_{k_0} - \bar{x}\| \leq (1 + \frac{c_1}{1 - c_2})\|x_{k_0} - \bar{x}\| \leq (1 + \frac{c_1}{1 - c_2})\bar{\epsilon} \leq \epsilon_2\). Hence, \(x_{k+1} \in\mathbb{B}(\bar{x}, \epsilon_2)\). 

Next, we show that \(\{x_k\}\) is a Cauchy sequence. For any \(\varepsilon > 0\), there exists \(\bar{\bar{k}} \geq k_0\), such that 
\[
{\rm dist}(x_k, \mathcal{S}^*) < \tilde{\varepsilon}, \quad \forall k > \bar{\bar{k}}
\]
since \(\lim_{k\to\infty}{\rm dist}(x_k, \mathcal{S}^*) = 0\), where \(\tilde{\varepsilon} = \frac{1 - c_2}{c_1}\varepsilon\). For any \(k_1, k_2 >  \bar{\bar{k}}\), where without loss of generality we assume \(k_1 > k_2\), we have 
\begin{align*}
\|x_{k_1} - x_{k_2}\| \leq& \sum_{j = k_2}^{k_1 -1}\|x_{j+1} - x_j\| = \sum_{j = k_2}^{k_1 -1}\|d_j\| \leq c_1\sum_{j=k_2}^{k_1 - 1}{\rm dist}(x_j, \mathcal{S}^*)\\
\overset{\eqref{eq:distkk}}{\leq}& c_1\sum_{j=k_2}^{k_1 - 1}c_2^{j - k_2}{\rm dist}(x_{k_2}, \mathcal{S}^*) \leq \frac{c_1}{1 - c_2}{\rm dist}(x_{k_2}, \mathcal{S}^*)< \frac{c_1}{1 - c_2}\tilde{\varepsilon} =\varepsilon.
\end{align*} 
Recall Theorem~\ref{th:limitsppn} (a), the cluster point set \(\omega(x_0)\subseteq \mathbb{R}^n\) of \(\{x_k\}\) is closed. Hence, the Cauchy sequence \(\{x_k\}\) converges to some \(\bar{x}\in\omega(x_0)\). By passing the limit \(k_1\to\infty\) to the above formula and using~\eqref{eq:xks}, we have for any \(k > \bar{\bar{k}}\), 
\[
\|x_{k+1} - \bar{x}\|\leq \frac{c_1}{1 - c_2}{\rm dist}(x_{k+1}, \mathcal{S}^*) \leq \mathcal{O}({\rm dist}^q(x_k, \mathcal{S}^*)) \leq \mathcal{O}(\|x_k - \bar{x}\|^q).
\]
Therefore, \(\{x_k\}\) converges to \(\bar{x}\) with the Q-superlinear rate of order \(q\).  \qed
\end{proof}

\section{The inexact proximal Newton method when \(L_H\) is known}\label{appendix: lhisgiven}

In this section, we show that when the Lipschitz constant \(L_H\) is used to define \(H_k\), Algorithm~\ref{alg:pnewton} is well-defined without performing line search. 

Denote \(\widehat{H}_k = B_k + (L_H + c + \mu_1\min\{1, \|\mathcal{G}(x_k)\|^{\delta}\})I\) for some \(c > 0\) and \(\mu_1 > 0\). We present the inexact proximal Newton method without line search in Algorithm~\ref{alg:pmm}. 

\begin{algorithm*}[!h]
\caption{The inexact proximal Newton method without line search.}\label{alg:pmm}
\begin{algorithmic}[1]
\REQUIRE{\(x_0\), \(c > 0\), \(\mu_1\in(0, 1]\), \(\mu_2 \in (0, \mu_1]\) and \(\delta \in [0, 1]\)}
\FOR{\(k = 0, 1, \cdots, \)}
\STATE{compute}
\begin{equation*}
x_{k+1} = \argmin_{x}\{q(x; x_k, \widehat{H}_k) + \ve_k^\top(x - x_k)\},
\end{equation*}
where the error vector \(\ve_k\) satisfies \(\|\ve_k\| \leq \frac{\mu_2}{2}\min\{1, \|\cG(x_k)\|^{\delta}\}\|x_{k+1} - x_k\|\). 
\ENDFOR
\RETURN{\(\{x_k\}\)}
\end{algorithmic}
\end{algorithm*}

Similar results to Lemma~\ref{lem:ngk} and Theorem~\ref{th:limitsppn} hold for Algorithm~\ref{alg:pmm}.
\begin{theorem}\label{th:limitspwithoutls}
Suppose Assumption~\ref{assume:ncp} holds. Let \(\{x_k\}\) be the sequence generated by Algorithm~\ref{alg:pmm}. 
Denote \(\bar{\eta} = M + L_H + 3\). 
The following statements hold. 
\begin{itemize}
\item[(a)] \(\|\cG(x_k)\| \leq \bar{\eta}\|x_{k+1} - x_k\|\). 
\item[(b)] \(\varphi(x_k) - \varphi(x_{k+1}) \geq  \frac{c}{2}\|x_{k+1} - x_k\|^2\). 
\item[(c)] \(\omega(x_0)\subseteq \mathcal{S}^*\) is nonempty, closed, and connected.  
\item[(d)] Define \(\zeta = \frac{2\bar{\eta}^2}{c}\), we have 
\[
\min_{0\leq j\leq k}\|\cG(x_j)\| \leq \frac{\sqrt{\zeta(\varphi(x_0) - \varphi_*)}}{\sqrt{k + 1}}. 
\] 
\end{itemize}
\end{theorem}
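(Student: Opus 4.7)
The plan is to adapt the arguments for Lemma~\ref{lem:ngk} and Theorem~\ref{th:limitsppn} to the no-line-search setting, exploiting that the extra $L_H$ term inside $\widehat{H}_k$ plays the role previously supplied by the Armijo condition~\eqref{eq:ls}. Parts~(a), (c), and~(d) will follow by rewriting the earlier arguments; part~(b) is the one place where the change from $H_k$ to $\widehat{H}_k$ must be used in an essential way, and (c) needs one extra ingredient to get connectedness.

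For part~(a), I would repeat the proof of Lemma~\ref{lem:ngk} with $H_k$ replaced by $\widehat{H}_k$ and $\hat{x}_k$ replaced by $x_{k+1}$. The chain---Moreau nonexpansivity to get $\|r_k(x_{k+1})\|\leq\|\ve_k\|$, monotonicity of $\partial g$ applied to the inclusions analogous to~\eqref{eq:ek2} and~\eqref{eq:cg}, and Cauchy--Schwarz---then yields $\|\cG(x_k)\|\leq (1+\|\widehat{H}_k\|)\|x_{k+1}-x_k\|+\|\ve_k\|$. Absorbing $\|\ve_k\|$ via the accuracy rule and using $\|\widehat{H}_k\|\leq M+L_H+c+\mu_1$ with $\mu_1\in(0,1]$ then delivers the claimed bound with constant $\bar\eta$.

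For part~(b), which is the key structural improvement over Algorithm~\ref{alg:pnewton}, I would first establish the analog of Lemma~\ref{lemma:dqkng} in the form
\[
q(x_{k+1};x_k,\widehat{H}_k)\leq\varphi(x_k)+\tfrac{\mu_2}{2}\min\{1,\|\cG(x_k)\|^{\delta}\}\|x_{k+1}-x_k\|^2,
\]
which goes through unchanged since it uses only convexity of $q$ in $x$ and the inexactness bound. Next, the $L_H$-descent lemma for $f$ gives $\varphi(x_{k+1})\leq l_k(x_{k+1};x_k)+\tfrac{L_H}{2}\|x_{k+1}-x_k\|^2$, and I would expand $l_k(x_{k+1};x_k)=q(x_{k+1};x_k,\widehat{H}_k)-\tfrac{1}{2}(x_{k+1}-x_k)^{\top}\widehat{H}_k(x_{k+1}-x_k)$. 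The $L_H/2$ term from the descent lemma cancels exactly with the $L_H$ inside $\widehat{H}_k$, and because $\mu_2\leq\mu_1$ the remaining coefficients combine to leave $\varphi(x_{k+1})\leq\varphi(x_k)-\tfrac{c}{2}\|x_{k+1}-x_k\|^2$.

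For part~(d), telescoping (b) and inserting (a) yields
\[
\varphi(x_0)-\varphi_{\ast}\geq\sum_{j=0}^{k}\tfrac{c}{2}\|x_{j+1}-x_j\|^{2}\geq\tfrac{c}{2\bar\eta^{2}}\sum_{j=0}^{k}\|\cG(x_j)\|^{2}\geq\tfrac{k+1}{\zeta}\min_{0\leq j\leq k}\|\cG(x_j)\|^{2}.
\]
For part~(c), (b) shows $\{x_k\}\subseteq\mathcal{L}_{\varphi}(x_0)$, which is bounded by Assumption~\ref{assume:ncp}(iii), so $\omega(x_0)$ is nonempty; closedness is standard from the definition of cluster-point set, and every cluster point is stationary because summability of $\|x_{k+1}-x_k\|^{2}$ together with (a) forces $\|\cG(x_k)\|\to 0$, after which the continuity of $\cG$ and Lemma~\ref{lem:sp} finish the argument. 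The genuinely new ingredient compared to Theorem~\ref{th:limitsppn}(a) is connectedness: here I would invoke the classical Ostrowski-type fact that a bounded sequence with $\|x_{k+1}-x_k\|\to 0$ has a connected cluster-point set, and the hypothesis $\|x_{k+1}-x_k\|\to 0$ is immediate from (b) and the convergence of $\{\varphi(x_k)\}$. The main obstacle is the cancellation in part~(b); once that is tracked carefully, the remaining parts are routine recaps of the Algorithm~\ref{alg:pnewton} analysis.
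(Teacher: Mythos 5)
Your proposal matches the paper's proof essentially step for step: part (b) via the analog of Lemma~\ref{lemma:dqkng} followed by the \(L_H\)-descent lemma with exact cancellation of the \(L_H\) terms and \(\mu_2\le\mu_1\), part (a) by rerunning the argument of Lemma~\ref{lem:ngk} with \(\widehat{H}_k\), part (c) via boundedness of the level set plus the Ostrowski connectedness argument, and part (d) by telescoping. The only discrepancy is cosmetic: the chain of estimates you describe yields the constant \(M+L_H+c+3\) in (a) rather than the stated \(\bar{\eta}=M+L_H+3\), which appears to be a typo in the theorem's statement given that the analogous constant \(\hat{\eta}=3+M+c\) in Lemma~\ref{lem:ngk} does include \(c\).
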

\begin{proof}
(a) The proof of statement (a) is similar to the proof of Lemma~\ref{lem:ngk}.

(b) Notice that Lemma~\ref{lemma:dqkng} still holds and we have
\begin{align}\label{eq:dphi}
\varphi(x_k) =& q_k(x_k) \geq q_k(x_{k+1}) \!-\! \frac{\mu_2}{2}\min\{1, \|\cG(x_k)\|^{\delta}\}\|x_{k+1} \!-\! x_k\|^2 \nonumber\\
=& \varphi(x_{k+1}) \!-\! (f(x_{k+1}) \!-\! f(x_k) \!-\! \nabla f(x_k)^\top(x_{k+1}  \!-\! x_k)) \nonumber \\
&+\!  \frac{1}{2}(x_{k+1}  \!-\! x_k)^\top \widehat{H}_k(x_{k+1} \!-\! x_k) \!-\! \frac{\mu_2}{2}\min\{1, \|\cG(x_k)\|^{\delta}\}\|x_{k+1}  \!-\! x_k\|^2 \nonumber \\
\geq& \varphi(x_{k+1}) \!-\! \frac{L_H}{2}\|x_{k+1}  \!-\! x_k\|^2 \!+\! \frac{1}{2}(x_{k+1}  \!- \!x_k)^\top \widehat{H}_k(x_{k+1}  \!-\! x_k) \nonumber\\
&-\! \frac{\mu_2}{2}\min\{1, \|\cG(x_k)\|^{\delta}\}\|x_{k+1}  \!-\! x_k\|^2 \nonumber\\
\geq & \varphi(x_{k+1}) \!+\! \frac{1}{2}(c \!+\! (\mu_1 \!-\!\mu_2)\min\{1, \|\mathcal{G}(x_k)\|^{\delta}\})\|x_{k+1}  \!-\! x_k\|^2.
\end{align}

(c) Under Assumption~\ref{assume:ncp}, statement (b) implies that \(\{\varphi(x_k)\}\) converges and
\[
\lim_{k\to\infty}\|x_{k+1} - x_k\|^2 = 0. 
\]
Hence, \(\omega(x_0)\) is nonempty, closed, and connected by the Ostrowski condition~\cite[Th.~8.3.9]{FP03}. 

(d) Summing the inequality in statement (b) over \(j = 0, 1, \cdots, k\), we obtain
\begin{align*}
\varphi(x_0) - \varphi_* \geq& \varphi(x_0) - \varphi(x_{k+1}) = \sum_{j=0}^k\varphi(x_j) - \varphi(x_{j+1}) \\
\geq& \frac{c}{2}\sum_{j=0}^k\|x_{j+1} - x_j\|^2 \geq \frac{c}{2\bar{\eta}^2}\sum_{j=0}^k\|\cG(x_j)\|^2 \geq  \frac{k+1}{\zeta}\min_{0\leq j\leq k}\|\cG(x_j)\|^2,
\end{align*}
where the third inequality follows from statement (a). 
So the statement holds. \qed
\end{proof}

Similar to Theorem~\ref{th:lcrxk}, the local convergence rate for the sequence generated by Algorithm~\ref{alg:pmm} can be established under the metric \(q\)-subregularity property. 
\begin{theorem}
Suppose that Assumptions~\ref{assume:errorbound} and~\ref{assume:g} hold. Let \(\{x_k\}\) be the sequence generated by Algorithm~\ref{alg:pmm} with \(c > \frac{(3\mu_2 - 2)L_H + \mu_2}{2 - \mu_2}\). 
Then for any \(\bar{x}\in\omega(x_0)\), \(\{x_k\}\) converges to \(\bar{x}\) with the Q-superlinear convergence rate of order \(q\). 
\end{theorem}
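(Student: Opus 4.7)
The plan is to mirror the argument for Theorem~\ref{th:lcrxk} with two structural simplifications: because Algorithm~\ref{alg:pmm} has no line search, the analogue of Lemma~\ref{lemma:45} is not needed (the iterate $x_{k+1}$ is \emph{defined} as the inexact minimizer of $q(\cdot\,;x_k,\widehat{H}_k)$, so it already plays the role that $\hat x_k$ played after acceptance in Algorithm~\ref{alg:pnewton}); and because $\widehat{H}_k$ has the larger shift $L_H+c+\mu_1\min\{1,\|\mathcal{G}(x_k)\|^\delta\}$, the uniform positive-definiteness lower bound is $\widehat{H}_k\succeq (L_H+c)I$ rather than $cI$. From Theorem~\ref{th:limitspwithoutls} we already have $\lim_k\|x_{k+1}-x_k\|=0$, $\lim_k\|\mathcal{G}(x_k)\|=0$, and the cluster set $\omega(x_0)\subseteq\mathcal{S}^*$; these serve as the starting point.

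The first step is to reproduce Lemmas~\ref{lem:42} and~\ref{lemma:43} for Algorithm~\ref{alg:pmm}. Let $\bar x_k=\arg\min_x q(x;x_k,\widehat{H}_k)$. Repeating the monotonicity-of-$\partial g$ argument from Lemma~\ref{lem:42} with $\widehat{H}_k$ in place of $H_k$, and using $\widehat{H}_k\succeq (L_H+c)I$ together with $\|r_k(x_{k+1})\|\leq\|\varepsilon_k\|\leq\frac{\mu_2}{2}\|x_{k+1}-x_k\|$, I would obtain
\[
\|x_{k+1}-\bar x_k\|\leq \tfrac{\mu_2}{2(L_H+c)}(1+\|\widehat{H}_k\|)\|x_{k+1}-x_k\|.
\]
Similarly, projecting $x_k$ onto $\mathcal{S}^*$ to get $x_{k,*}$ and running through the derivation of Lemma~\ref{lemma:43} with $\widehat{H}_k-\nabla^2 f(x_k)=([-\lambda_{\min}(\nabla^2 f(x_k))]_+ + L_H+c+\mu_1\min\{1,\|\mathcal{G}(x_k)\|^\delta\})I$ and Assumption~\ref{assume:g}(i), one gets a bound of the form
\[
\|x_k-\bar x_k\|\leq \tfrac{L_C}{2(L_H+c)}{\rm dist}^2(x_k,\mathcal{S}^*)+\tfrac{3L_H+2c+\mu_1}{L_H+c}{\rm dist}(x_k,\mathcal{S}^*).
\]

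Combining the two estimates via the triangle inequality and using $\|\widehat{H}_k\|\leq 3L_H+c+\mu_1$, the role of the assumption $c>\frac{(3\mu_2-2)L_H+\mu_2}{2-\mu_2}$ is precisely to guarantee $2(L_H+c)-\mu_2(1+\|\widehat{H}_k\|)>0$, which lets me solve for $\|x_{k+1}-x_k\|$ and conclude $\|x_{k+1}-x_k\|\leq c_1\,{\rm dist}(x_k,\mathcal{S}^*)$ whenever $x_k$ is close to a cluster point. Verifying that this is exactly the constant threshold needed (up to absorbing the $\mu_1$ contribution) is the main technical obstacle; the rest is book-keeping.

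The final stretch copies the proof of Theorem~\ref{th:lcrxk} verbatim with $x_{k+1}$ substituted for $\hat x_k$. Using Assumption~\ref{assume:errorbound} together with nonexpansivity of ${\rm prox}_g$, the Lipschitz continuity of $\nabla^2 f$, and the bound $\|\mathcal{G}(x_{k+1})\|-\|r_k(x_{k+1})\|\leq\|\mathcal{G}(x_{k+1})-r_k(x_{k+1})\|$, I would deduce
\[
{\rm dist}(x_{k+1},\mathcal{S}^*)\leq \kappa\bigl[O({\rm dist}^2(x_k,\mathcal{S}^*))+O({\rm dist}(x_k,\mathcal{S}^*))\bigr]^q,
\]
which, since $q>1$ and ${\rm dist}(x_k,\mathcal{S}^*)\to 0$, gives ${\rm dist}(x_{k+1},\mathcal{S}^*)\leq c_2\,{\rm dist}(x_k,\mathcal{S}^*)$ with $c_2\in(0,1)$ for all large $k$. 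Picking $\bar\epsilon$ so that $(1+\frac{c_1}{1-c_2})\bar\epsilon$ stays in the tube where all previous estimates are valid, an induction on $k$ combined with the geometric series $\sum c_2^j$ shows that once one iterate $x_{k_0}$ lies in $\mathbb{B}(\bar x,\bar\epsilon)$ all later iterates remain close, $\{x_k\}$ is Cauchy, hence converges to some point of $\omega(x_0)$, and passing to the limit in the displayed inequality yields the Q-superlinear rate of order $q$.
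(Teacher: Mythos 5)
The paper gives no proof of this theorem---it only remarks that the result follows ``similar to Theorem~\ref{th:lcrxk}''---and your reconstruction is exactly that intended adaptation: replace $H_k$ by $\widehat{H}_k\succeq(L_H+c)I$, drop the line-search lemma since $x_{k+1}$ is the inexact minimizer by construction, and rerun the analogues of Lemmas~\ref{lem:42}--\ref{lemma:43}, the bound $\|x_{k+1}-x_k\|\le c_1\,{\rm dist}(x_k,\mathcal{S}^*)$, and the induction/Cauchy argument with the new constants. Your flag about the threshold is also on target: carrying $\|\widehat{H}_k\|\le 3L_H+c+\mu_1$ through literally requires $c(2-\mu_2)>(3\mu_2-2)L_H+\mu_2(1+\mu_1)$, which is marginally stronger than the stated $c>\frac{(3\mu_2-2)L_H+\mu_2}{2-\mu_2}$ (the $\mu_1\mu_2$ term appears to have been dropped), but this is a constant-bookkeeping discrepancy in the paper rather than a gap in your argument.
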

\begin{remark}
Here are some remarks for Algorithm~\ref{alg:pmm}. 
\begin{itemize}
\item[(i)] The Kurdyka-\L ojasiewicz (KL) property~\cite{L93,K98} has been widely used to establish the convergence result of the proximal-type algorithm for nonconvex problems~\cite{AB09,ABS13}. By checking the standard assumption presented in~\cite{AB09,ABS13}, the global convergence and the local convergence rate of the sequence generated by Algorithm~\ref{alg:pmm} can be obtained under the KL assumption on \(\varphi\). 

\item[(ii)] For any \(\epsilon > 0\), denote \(\overline{K} := \lceil \frac{2\bar{\eta}^2}{\mu_1 - \mu_2}(\varphi(x_0) - \varphi_*)\epsilon^{-2 - \delta}\rceil\). Some iterate \(x_k\), \(k = 0, \cdots, \overline{K} + 1\) generated by Algorithm~\ref{alg:pmm} will satisfy \(\|\mathcal{G}(x_k)\|\leq \epsilon\). Suppose that \(\|\cG(x_{k+1})\| > \epsilon\) for all \(k = 0, 1, \cdots, \overline{K}\). From~\eqref{eq:dphi}, we have 
\begin{align*}
\varphi(x_0) - \varphi(x_{\overline{K}+1}) =& \sum_{l=0}^{\overline{K}}\varphi(x_l) - \varphi(x_{l+1}) \\
 \geq&   \frac{\mu_1 -\mu_2}{2}\sum_{l=0}^{\overline{K}}\min\{1, \|\mathcal{G}(x_k)\|^{\delta}\}\|x_{k+1}  - x_k\|^2\\
\geq &\frac{\mu_1 -\mu_2}{2\bar{\eta}^2}\sum_{l=0}^{\overline{K}}\min\{1, \epsilon^{\delta}\}\epsilon^2 \geq \frac{\mu_1 -\mu_2}{2\bar{\eta}^2}(\overline{K}+1)\epsilon^{2+\delta}\\
 >& \varphi(x_0) - \varphi_*,
\end{align*}
where the second inequality follows from Theorem~\ref{th:limitspwithoutls} (a) and the last inequality follows from the definition of \(\overline{K}\). The above inequality contradicts the definition of {\(\varphi_*\)}, so the statement holds. 

\item[(iii)] When \(L_H\) is unknown, it follows from the proof of Theorem~\ref{th:limitspwithoutls} (b) that the constant \(L_H\) in \(\widehat{H}_k\) is mainly used to ensure 
\begin{align*}
f(x_{k+1}) - f(x_k) - \langle\nabla f(x_k), x_{k+1} - x_k\rangle \leq \frac{L_H}{2}\|x_{k+1} - x_k\|^2.
\end{align*}
Let \(\overline{U}_H^0 > 0\) be an estimate of \(L_H\). Define \(H_k(L_H^k) = B_k + (L_H^k + c + \mu_1\min\{1, \|\mathcal{G}(x_k)\|^{\delta}\})I\) and \(\tilde{f}(x; L_H^k) = f(x) - f(x_k) - \langle\nabla f(x_k), x-x_k\rangle - \frac{L_H^k}{2}\|x - x_k\|^2\). We can consider the following scheme (Algorithm~\ref{alg:pmm2}) instead of Algorithm~\ref{alg:pmm} when the constant \(L_H\) is unknown. The line search strategy based on the Lipschitz constant \(L_H\) has been discussed in reference~\cite{NP06}.

\begin{algorithm*}[th!]
\caption{An adaptive inexact proximal Newton method with line search on \(L_H^k\).}\label{alg:pmm2}
\begin{algorithmic}[1]
\REQUIRE{\(x_0\), \(\delta \in [0, 1]\), \(\mu_1\in(0, 1]\), \(\mu_2\in (0, \mu_1]\) and \(L_H^0 = \overline{U}_H^0\)}
\FOR{\(k = 0, 1, \cdots, \)}
\STATE{Compute}
\[
x(L_H^k) = \argmin_{x}\{q(x; x_k, H_k(L_H^k)) + \ve_k^\top(x - x_k)\}
\]
with \(\|\ve_k\| \leq \frac{1}{4}\min\{1, \|\cG(x_k)\|^{\delta}\}\|x(L_H^k) - x_k\|\). 
\WHILE{\(\tilde{f}(x(L_H^k); L_H^k) > 0\)}
\STATE{\(L_H^k := 2L_H^k\)}
\STATE{Compute}
\[
x(L_H^k) = \argmin_{x}\{q(x; x_k, H_k(L_H^k)) + \ve_k^\top(x - x_k)\}
\]
with \(\|\ve_k\| \leq \frac{\mu_2}{2}\min\{1, \|\cG(x_k)\|^{\delta}\}\|x(L_H^k) - x_k\|\). 
\ENDWHILE
\STATE{set \(x_{k+1} = x(L_H^k)\)}. 
\STATE{set \(L_H^{k+1} = \max\{\frac{1}{2}L_H^k, \overline{U}_H^0\}\)}. 
\ENDFOR
\RETURN{\(\{x_k\}\)}
\end{algorithmic}
\end{algorithm*}

Notice that \(\tilde{f}(x(L_H^k); L_H^k) \leq 0\) for any \(L_H^k \geq L_H\). 
The total amount of additional computations for finding \(L_H^k\) is bounded by \(2(\overline{K}+1) + \log_2(\frac{2L_H}{\overline{U}_H^0})\), where \(\overline{K}\) is given in statement (i).

\item[(iv)] If \(g(x) \equiv 0\) and \(H_k = \nabla^2f(x_k)  + \varsigma_kI\), where \(\varsigma_k = \max\{0, -\lambda_{\min}(\nabla^2f(x_k))\} + L_H + c + \mu_1\min\{1, \|\nabla f(x_k)\|^{\delta}\}\), then Algorithm~\ref{alg:pmm} reduces to the regularized Newton method for nonconvex problem \(\min_x f(x)\) with the update formula \(x_{k+1} = x_k + d_k\), where \(d_k\) satisfies 
\[
\|(\nabla^2f(x_k) + \varsigma_kI)d_k + \nabla f(x_k)\| \leq \frac{\mu_2}{2}\min\{1, \|\nabla f(x_k)\|^{\delta}\}\|d_k\|
\]
is an inexact solution of the regularized Newton equation \((\nabla^2f(x_k) + \varsigma_kI)d = -\nabla f(x_k)\). According to the statement (ii) of this remark, Algorithm~\ref{alg:pmm} returns a point satisfies \(\|\nabla f(x_k)\| \leq \epsilon\) within at most \(\mathcal{O}(\epsilon^{-2-\delta})\) iterations. 
\end{itemize}
\end{remark}

\section{Numerical experiments}\label{sec:numerical}
In this section, we evaluate the effectiveness and efficiency of our proposed method on the \(\ell_1\)-regularized Student's \(t\)-regression and the group penalized Student's \(t\)-regression. All numerical experiments are implemented in MATLAB R2023b running on a computer with an Intel(R) Core(TM) i9-10885U CPU @ 2.40GHz \(\times\) 2.4 and 32GB of RAM.

\subsection{\(\ell_1\)-regularized Student's \(t\)-regression}\label{subsec:st}
 The \(\ell_1\)-regularized Student's \(t\)-regression~\cite{AFHL12} problem takes the form of 
 
 \begin{equation}\label{eq:st}
 \min_x\sum_{i=1}^m\log(1 + (Ax-b)_i^2/\nu) + \lambda \|x\|_1,
 \end{equation}
 where \(\nu > 0\) and \(\lambda > 0\) is the regularized parameter. Problem~\eqref{eq:st} is a special case of Problem~\eqref{eq:ncp} with \(f(x) := \sum_{i=1}^m\log(1 + (Ax-b)_i^2/\nu)\) and \(g(x): = \lambda \|x\|_1\). Problem~\eqref{eq:st} is nonconvex and can be used to find the sparse solution \(\bar{x}\) from \(Ax = b\) when the measure \(b\) with heavy-tailed Student-t error~\cite{AFHL12}. We solve the \(x\)-subproblem in Algorithm~\ref{alg:pnewton} by the semismooth Newton (SSN) method presented in~\cite{LST18}. Details are given in Appendix~\ref{Append: SSN}.

 The test examples are randomly generated in the same way as in~\cite{MU14}. The reference signal \(x^{\rm true}\in\mathbb{R}^n\) of length \(n = 512^2\) with \(k = [n/40]\) nonzero entries, where the \(k\) different indices \(i\in\{1, \cdots, n\}\) of nonzero entries are randomly chosen and the magnitude of each nonzero entry is determined via
 \[
x^{\rm true}_i = \eta_1(i)10^{d\eta_2(i)/20},
 \]
\(\eta_1(i) \in\{-1, +1\}\) is a symmetric random sign and \(\eta_2(i)\) is uniformly distributed in \([0, 1]\). The signal has dynamic range of \(d\) dB.
The matrix \(A\in\mathbb{R}^{m\times n}\) takes \(m = n/8\) random cosine measurements, i.e., \(Ax^{\rm true} = ({\rm dct}(x^{\rm true}))_{J}\), where \(J \subset \{1, \cdots, n\}\) with \(\vert J\vert = m\) is randomly chosen and \({\rm dct}\) denotes the discrete cosine transform.  The measurement \(b\) is obtained by adding Student's t-noise with degree of freedom \(5\) and rescaled by \(0.1\) to \(Ax^{\rm true}\). 

We set \(\lambda = 0.1\|\nabla f(0)\|_{\infty}\) and \(\nu = 0.25\) in Problem~\eqref{eq:st}. We first test parameter \(\delta\) in the definition of \(H_k\) for Algorithm~\ref{alg:pnewton}. We set \(x_0 = A^\top b\), \(\mu_1 = \mu_2 = 0.1\), \(\theta = 0.6\), \(\tau = \min\{10^{-4},\frac{1}{2}c\}\) in Algorithm~\ref{alg:pnewton}. We set \(\gamma = 10^{-4}\), \(\bar{\beta} = 0.5\), \(\hat{\beta} = 0.3\), \(\tilde{\beta} = 0.8\) in the SSN method (Algorithm~\ref{alg:ssn} in Appendix~\ref{Append: SSN}). We stop the SSN method when \(\|V^ts^t + \nabla\phi(z^t)\| \leq \bar{\epsilon}_t\) with \(\bar{\epsilon}_t = \min\{\bar{\epsilon}_{t-1}, \|\nabla\phi(z^t)\|^{(1 + \hat{\beta})}\}\), \(\bar{\epsilon}_0 = 10\), or the iteration number reach to \(50\). We stop Algorithm~\ref{alg:pnewton} when \(\|\mathcal{G}(x_k)\| \leq 10^{-5}\). 
Figure~\ref{fig:deltas} displays the average performance of running time as \(\delta\) changes from \(0\) to \(1\) with step size \(0.05\) over 10 trails for \(d = 20\)dB and \(d = 40\)dB, respectively. We set \(c = 0.05\) and \(0.01\) for \(d = 20\) and \(40\), respectively. It can be seen that larger \(\delta\) is beneficial to improve the speed of the algorithm.  We choose \(\delta = 0.95\) for the subsequent test in this subsection if not explicitly stated. Figure~\ref{fig:cr} displays the convergence rate curves of the iterate sequences in terms of \(\|\mathcal{G}(x_k)\|\) and \(\|x^k - \bar{x}\|\) yielded by Algorithm~\ref{alg:pnewton} with different \(\delta\) for \(d = 80\)dB, where \(\bar{x}\) denotes the output of Algorithm~\ref{alg:pnewton}.  
Figure~\eqref{fig:crng} shows that Algorithm~\ref{alg:pnewton} can achieve better convergence rate in terms of \(\|\mathcal{G}(x_k)\|\) than sublinear when implemented.  Figure~\eqref{fig:crdx} shows that Algorithm~\ref{alg:pnewton} can achieve superlinear convergence rate in terms of \(\|x^k - \bar{x}\|\). 

\begin{figure}[h!]
\begin{minipage}[t]{1\linewidth}
\centering
\includegraphics[width = 0.45\textwidth]{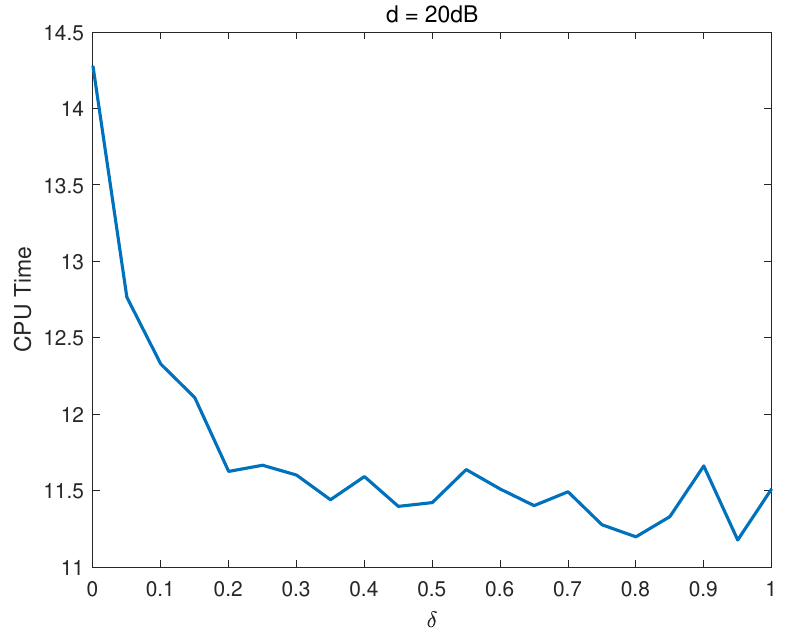}
\includegraphics[width = 0.45\textwidth]{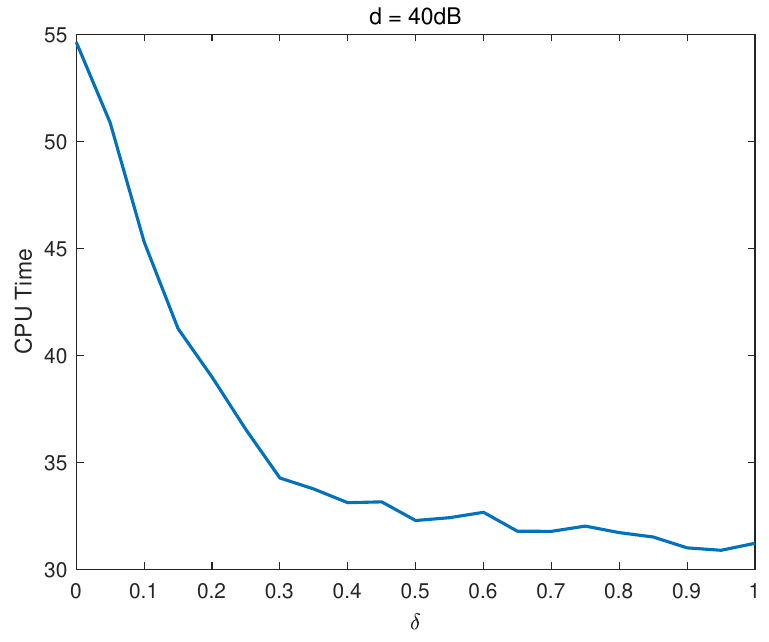}
\end{minipage}
\caption{Average performance of Algorithms~\ref{alg:pnewton} as \(\delta\) changes over \(10\) trails. Left: \(d = 20\)dB. Right: \(d = 40\)dB.}\label{fig:deltas}
\end{figure}
\begin{figure}[h!]
\centering
\subfloat[Convergence of \(\|\mathcal{G}(x_k)\|\)]
{
\label{fig:crng}\includegraphics[width = 0.45\textwidth]{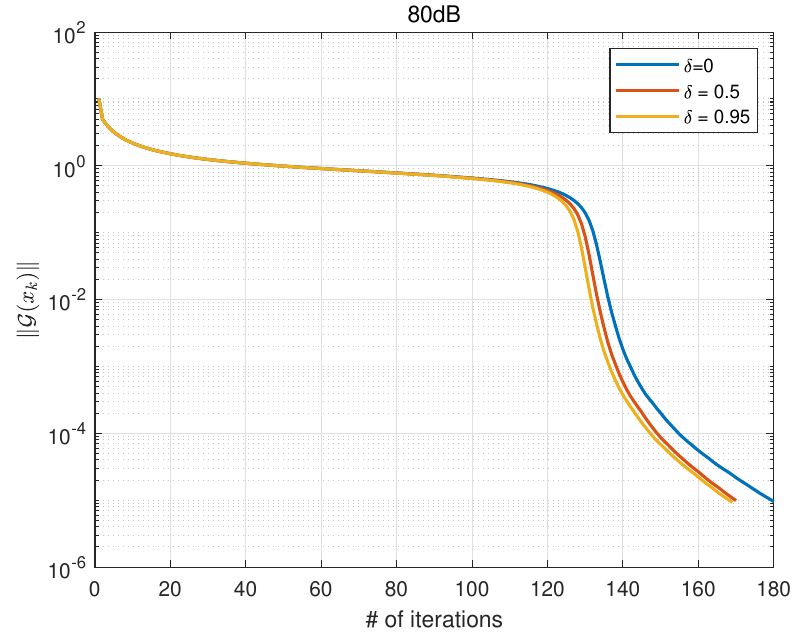}
}
\subfloat[Convergence of \(\|x_k - \bar{x}\|\)]
{
\label{fig:crdx}\includegraphics[width = 0.45\textwidth]{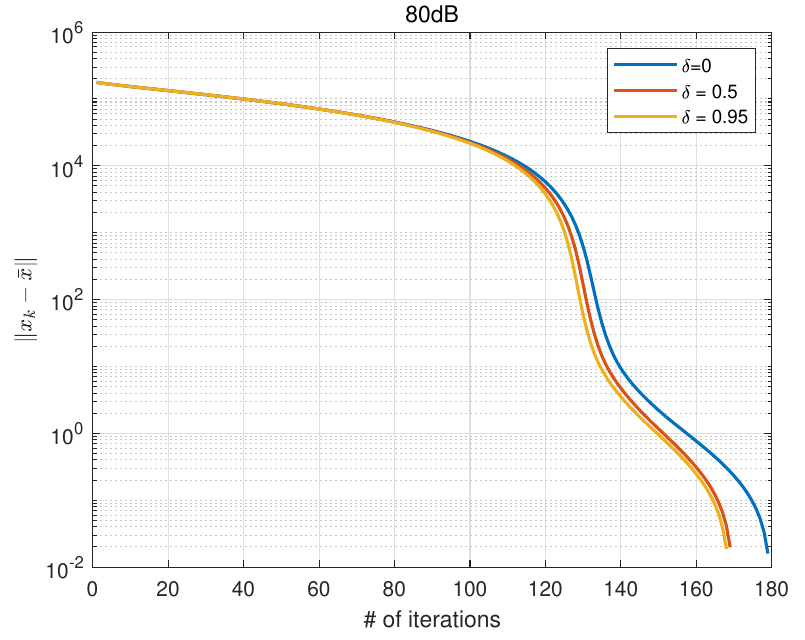}
}
\caption{The convergence behavior of Algorithms~\ref{alg:pnewton} with respect to the number of iterations.}\label{fig:cr}
\end{figure}

Next, we compare Algorithm~\ref{alg:pnewton} with the inexact regularized proximal Newton method (IRPNM for short) proposed in~\cite{LPWY23} and the globalized inexact proximal Newton-type method (GIPN for short) proposed in~\cite{KL21}. The code of IRPNM is collected from \url{https://github.com/SCUP-OptGroup/IRPNM}. All parameters in IRPNM are consistent with the values used in this code. In IRPNM, the \(x\)-subproblems were solved by the semismooth Newton augmented Lagrangian method. In GIPN, the \(x\)-subproblems were solved by using the SSN method proposed in~\cite{MU14} on the KKT system \(r_k(x) = 0\) or performs a proximal gradient step if the return of the SSN method is not satisfactory. In our test, we set \(H_k = \nabla^2f(x_k) + 1.001[-\lambda_{\min}(\nabla^2f(x_k))]_+I\) for GIPN to ensure that the \(x\)-subproblem is convex. We stop each algorithm when \(\|\mathcal{G}(x_k)\| \leq 10^{-5}\), the number of iterations achieves to \(1000\) or the running time achieves to \(1800\)s. Table~\ref{tab:st} lists the average performance of each algorithm over \(10\) trails. Similar performance can be observed for Algorithm~\ref{alg:pnewton} and IRPNM except for \(d = 60\), Algorithm~\ref{alg:pnewton} requires more running time for \(d = 60\). GIPN can not achieve the required accuracy for \(d = 60\) and \(d = 80\). One possible reason is that GIPN may perform proximal gradient steps both in the inner (caused by the SSN method proposed in~\cite{MU14}) and outer loop to obtain \(\hat{x}_k\).  

 \setlength{\tabcolsep}{1pt}
\begin{table}[!th]
\setlength{\abovecaptionskip}{.2cm}
\caption{Numerical comparisons for the \(\ell_1\)-regularized Student's \(t\)-regression.}\label{tab:st}
\scalebox{0.88}{
\centering
\begin{tabular}{c|ccc|ccc|ccc} \hline
\multirow{2}{*}{\(d\) }  & \multicolumn{3}{c|}{Alg. 1} & \multicolumn{3}{c|}{IRPNM} & \multicolumn{3}{c}{GIPN} \\ \cline{2-10}
 & fv        & \(\|\mathcal{G}(x_k)\|\)     & time(s) & fv        & \(\|\mathcal{G}(x_k)\|\)    & time(s) & fv        & \(\|\mathcal{G}(x_k)\|\)     & time(s)\\ \hline
20&9532.54   & 8.17E-06 & {\color{blue}8.59}   & 9532.54   & 8.84E-06 & 12.96  & 9532.54   & 7.41E-06 & 15.08   \\
40& 23812.88  & 6.24E-06 & {\color{blue}24.98}   & 23812.88  & 7.06E-06 & 30.30   & 23812.88  & 5.63E-06 & 123.39  \\
60& 54228.01  & 8.54E-06 & 103.85  & 54228.01  & 7.69E-06 & {\color{blue}79.35}   & 133772.88 & 1.19E+01 & 1801.87 \\
80&134779.26 & 8.21E-06 & {\color{blue}284.67}  & 134779.26 & 8.37E-06 & 328.69  & 490372.40 & 7.63E+00 & 1801.51 \\ \hline
\end{tabular}
}
\end{table}%

\subsection{Group penalized Student's \(t\)-regression}\label{sec:gpt}

Group penalized Student's \(t\)-regression can be formulated by  
\begin{equation}\label{eq:gsr}
\min_{x\in\mathbb{R}^n}\sum_{i=1}^m\log(1 + (Ax-b)_i^2/\nu) + \lambda\sum_{i=1}^l\|x_{J_i}\|, 
\end{equation}
where \(\nu > 0\), \(\lambda > 0\) is the regularized parameter, \(l\in\mathbb{N}\) is the number of groups, and \(\{J_1, \cdots, J_l\}\) is a partition of the index set \(\{1, \cdots, n\}\). Problem~\eqref{eq:gsr} takes the form of  Problem~\eqref{eq:ncp}, where \(f(x) := \sum_{i=1}^m\log(1 + (Ax-b)_i^2/\nu)\) is totally the same as discussed in Section~\ref{subsec:st} and \(g(x) := \lambda\sum_{i=1}^l\|x_{J_i}\|\).  

We generate the reference signal \(x^{\rm true}\), \(A\), and \(b\) in the same way as in Section~\ref{subsec:st}. 
We set \(\lambda = 0.1\|\nabla f(0)\|_{\infty}\) and \(\nu = 0.2\) in Problem~\eqref{eq:gsr} and set \(x_0 = A^\top b\) and \(\delta = 0\) in Algorithm~\ref{alg:pnewton}. 
Observe that the only difference between Problems~\eqref{eq:st} and~\eqref{eq:gsr} is the nonsmooth mapping \(g(x)\). The SSN method discussed in Appendix~\ref{Append: SSN} still works on the \(x\)-subproblem in Algorithm~\ref{alg:pnewton}.  We use the same parameters for the SSN method as in Section~\ref{subsec:st}. 
Table~\ref{tab:gst} reports the average objective values, KKT residuals and running times over \(10\) independent trials for \(\epsilon_0 = 10^{-5}\), \(n = 2^{18}\), \(d = \{60{\rm dB}, 80{\rm dB}\}\), \(l = 2^{10}\), and \(s = \{64, 128\}\), where \(s\) denotes the number of nonzero groups. According to the comparison results obtained in Section~\ref{subsec:st}, we compare with IRPNM in this test. The code of IRPNM is collected from \url{https://github.com/SCUP-OptGroup/IRPNM}. All parameters in IRPNM are consistent with the values used in this code. 
It can be seen from Table~\ref{tab:gst} that Algorithm~\ref{alg:pnewton} yields the same objective values as IRPNM does, but requires much less running time than the latter does.   
\begin{table}[!h]
\setlength{\abovecaptionskip}{.2cm}
\caption{Numerical comparisons for the group penalized Student's \(t\)-regression.} \label{tab:gst}
\centering
\setlength{\tabcolsep}{2mm}
\begin{tabular}{c|c|ccc|ccc}\hline
\multirow{2}{*}{\(d\)}  & \multirow{2}{*}{\(s\)} & \multicolumn{3}{c|}{Alg.1}     & \multicolumn{3}{c}{IRPNM}     \\ \cline{3-8}
                    &                    & fv       & \(\|\mathcal{G}(x_k)\|\)  & time(s) & fv       & \(\|\mathcal{G}(x_k)\|\) & time(s) \\ \hline
\multirow{2}{*}{60} & 64                 & 17852.99 & 7.87E-06 & {\color{blue}24.95}   & 17852.99 & 8.10E-06 & 31.22   \\
                    & 128                & 21670.19 & 7.92E-06 & {\color{blue}22.10}   & 21670.19 & 9.14E-06 & 39.93   \\ \hline
\multirow{2}{*}{80} & 64                 & 52741.59 & 7.89E-06 & {\color{blue}62.64}   & 52741.59 & 9.69E-06 & 296.34  \\
                    & 128                & 63451.74 & 8.07E-06 & {\color{blue}79.13 }  & 63451.74 & 9.24E-06 & 459.31  \\ \hline
\end{tabular}
\end{table}

\section{Conclusions}\label{sec:con}
Inexact proximal Newton methods have been studied in the literature, and none of them provide both global and local convergence rates. One of the main differences between these inexact proximal Newton methods is the accuracy criteria of inexactly solving \(x\)-subproblems. In this paper, we propose an inexact proximal Newton method with only one accuracy condition (compared with two accuracy conditions in most existing methods listed in Table~\ref{tab:ac}). Additionally, both global and local convergence rates are provided under mild assumptions. In our future work, we will discuss the stochastic block-coordinate version of Algorithm~\ref{alg:pnewton}.

\section*{Data Availability Statement}
The data that support the findings of this study are available from the author upon reasonable request.

\begin{appendices}
\section{Semismooth Newton method}\label{Append: SSN}
 In this section, we show how to find \(\hat{x}_k\) for each \(k\in\mathbb{N}\) satisfying~\eqref{eq:errvek} by using the semismooth Newton method. 
 Denote \(\psi(x) = \sum_{i=1}^m\log(1 + x_i^2/\nu)\). We have \(f(x) = \psi(Ax - b)\), \(\nabla f(x) = A^\top\nabla\psi(Ax - b)\), and \(\nabla^2f(x) = A^\top\nabla^2\psi(Ax - b) A\). For each \(x_k\), we can choose \(H_k = A^\top(D_k + \rho_k I)A + \hat{\mu}_k I\), where \(D_k = \nabla^2\psi(Ax_k - b)\), \(\rho_k = 0\) if \(\lambda_{\min}(D_k) > 0\), otherwise, \(\rho_k = \frac{c}{2} - \lambda_{\min}(D_k)\), and \(\hat{\mu}_k = c + \mu_1\min\{1, \|\mathcal{G}(x_k)\|^{\delta}\}\). Then the objective function \(q_k(x) := q(x; x_k, H_k)\) of \(x\)-subproblem in Algorithm~\ref{alg:pnewton} takes the form 
 \begin{align*}
q_k(x) =& f_k + \langle A^\top\nabla\psi(Ax_k - b), x - x_k\rangle + \frac{1}{2}\langle x - x_k, A^\top(D_k + \rho_k I)A(x - x_k)\rangle \\
 &+ \frac{\hat{\mu}_k}{2}\|x - x_k\|^2 + g(x), 
  \end{align*}
where \(f_k = f(x_k)\). Denote \(g_k = (D_k + \rho_kI)^{-1/2}\nabla\psi(Ax_k - b)\), \(A_k = (D_k + \rho_kI)^{1/2}A\) and \(b_k = A_kx_k\). We have
\[
q_k(x) = f_k + \langle g_k, y\rangle + \frac{1}{2}\|y\|^2  + \frac{\hat{\mu}_k}{2}\|x - x_k\|^2 + g(x)\]
with \(y = A_kx - b_k\). 
Therefore, the \(x\)-subproblem can be equivalently written as
\begin{equation}\label{eq:xysub}
\min_{x,y}\{\langle g_k, y\rangle + \frac{1}{2}\|y\|^2  + \frac{\hat{\mu}_k}{2}\|x - x_k\|^2 + g(x), \quad {\rm s.t.}~A_kx - y = b_k\}. 
\end{equation}
The Lagrangian function of Problem~\eqref{eq:xysub} is given by
\[
L(x, y; z) = \langle g_k, y\rangle + \frac{1}{2}\|y\|^2 + \frac{\hat{\mu}_k}{2}\|x - x_k\|^2 + g(x) + \langle z, A_kx - y - b_k\rangle, 
\]
where \(z\in\mathbb{R}^m\) is the Lagrangian multiplier. 
Notice that 
\begin{align*}
-\phi(z) := &\inf_{x,y}\{L(x, y; z)\} \\
=& \inf_x\{g(x) \!+\! \frac{\hat{\mu}_k}{2}\|x \!-\! x_k\|^2 \!+\!  \langle z, A_kx\rangle\} \!+\! \inf_y\{ \frac{1}{2}\|y\|^2 \!+\! \langle g_k, y\rangle \!-\! \langle z, y\rangle\} \!-\! \langle b_k, z\rangle\\
=&\inf_x\{\!g(x) \!+\! \frac{\hat{\mu}_k}{2}\|x \!\!-\!\! (x_k \!\!-\!\! \frac{1}{\mu_k}\!A_k^\top z)\|^2\!\} \!+\! \inf_y\{ \!\frac{1}{2}\|y\|^2 \!\!+\! \langle g_k, \!y\rangle \!-\! \langle z, \!y\rangle\!\} \!-\! \frac{1}{2\hat{\mu}_k}\!\|A_k^\top z\|^2 \\
=& g({\rm Prox}_{g/\hat{\mu}_k}(x_k - \frac{1}{\hat{\mu}_k}A_k^\top z)) -\frac{1}{2}\|z - g_k\|^2 - \frac{1}{2\hat{\mu}_k}\|A_k^\top z\|^2\\
&+ \frac{\hat{\mu}_k}{2}\|{\rm Prox}_{g/\hat{\mu}_k}(x_k - \frac{1}{\hat{\mu}_k}A_k^\top z) - (x_k - \frac{1}{\hat{\mu}_k}A_k^\top z)\|^2,
\end{align*}
where \(x = {\rm Prox}_{g/\hat{\mu}_k}(x_k - \frac{1}{\hat{\mu}_k}A_k^\top z)\) and \(y = z - g_k\). 
Therefore, the dual problem of~\eqref{eq:xysub} is given by \(\min_{z}\phi(z)\). \(z^*\) is the solution of the dual problem if and only if \(\nabla \phi(z^*) = 0\). The exact solution of \(x\)-subproblem can be computed simultaneously by \(x^* =  {\rm Prox}_{g/\hat{\mu}_k}(x_k - \frac{1}{\hat{\mu}_k}A_k^\top z^*)\). 
 \(z^*\) can be obtained via solving the following nonsmooth equation:
 \begin{equation}\label{eq:nonsmootheq}
  \nabla\phi(z) := -A_k{\rm Prox}_{g/\hat{\mu}_k}(x_k - \frac{1}{\hat{\mu}_k}A_k^\top z) + z + b_k  -  g_k = 0. 
 \end{equation}
 Notice that \({\rm Prox}_{g/\mu_k}(\cdot)\) with \(g(x) = \lambda\|x\|_1\) is a Lipschitz continuous piecewise affine function and thus strongly semismooth~\cite{FP03}. {Nonsmooth equation~\eqref{eq:nonsmootheq} can be solved by the semismooth Newton (SSN) method~\cite{QS93,LST18}. Let \(\partial^2\phi(z) = \partial(\nabla \phi)(z)\) be the generalized Hessian of \(\phi\) at \(z\). Define
 \[
 \hat{\partial}^2\phi(z) := \frac{1}{\hat{\mu}_k}A_k\partial{\rm Prox}_{g/\hat{\mu}_k}(x_k - \frac{1}{\hat{\mu}_k}A_k^\top z)A_k^\top + I,
 \]
 where \(\partial{\rm Prox}_{g/\hat{\mu}_k}(x_k - \frac{1}{\hat{\mu}_k}A_k^\top z)\) is the Clarke subdifferential of \({\rm Prox}_{g/\hat{\mu}_k}(\cdot)\) at \( x_k - \frac{1}{\hat{\mu}_k}A_k^\top z\). 
 From \cite[Prop 2.3.3 \& Th. 2.6.6]{C83}, we have
 \[
\partial^2\phi(z)(d) \subseteq \hat{\partial}^2\phi(z)(d), \quad \forall d.
 \]
Let \(V = \frac{1}{\hat{\mu}_k}A_kUA_k^\top + I\) with \(U\in \partial{\rm Prox}_{g/\hat{\mu}_k}(x_k - \frac{1}{\hat{\mu}_k}A_k^\top z)\). Then \(V\in \hat{\partial}^2\phi(z)\) is symmetric positive definite. The iteration of SSN presented in~\cite{LST18} takes the form of \(z^{t+1} = z^t + \alpha_t s^t\), where \(\alpha_t > 0\) is the step size, iteration direction \(s^t\) is an (inexact) solution of the linear system 
\begin{equation}\label{eq:es}
V^ts + \nabla \phi(z^t) = 0
\end{equation}
for some \(V^t\in \hat{\partial}^2\phi(z^t)\). We summarize the SSN method presented in Algorithm~\ref{alg:ssn}. Let \(\{z^t\}\) be the sequence generated by Algorithm~\ref{alg:ssn}. Then \(\{z^t\}\) convergences to \(z^*\)~\cite{LST18}. 
For more details on the SSN  method, see~\cite{QS93,LST18}. }
 
\begin{algorithm*}[th!]
\caption{A semismooth Newton method for solving~\eqref{eq:nonsmootheq} (\(SSN(x_k, A_k, \hat{\mu}_k)\)).}\label{alg:ssn}
\begin{algorithmic}[1]
\REQUIRE{\(\gamma\in(0, 1/2)\), \(\bar{\beta}\in(0, 1)\), \(\hat{\beta}\in(0, 1]\), and \(\tilde{\beta}\in(0, 1)\). Choose \(z^0\). }
\FOR{\(t = 0, 1, \cdots, \)}
\STATE{choose \(U^t \in \partial{\rm Prox}_{g/\hat{\mu}_k}(x_k - \frac{1}{\hat{\mu}_k}A_k^\top z^t)\). Let \(V^t = \frac{1}{\hat{\mu}_k}A_kU^tA_k^\top + I\). Solve the linear system~\eqref{eq:es} by the conjugate gradient (CG) method to find \(s^t\) such that }
\[
\|V^ts^t + \nabla \phi(z^t)\| \leq \min\{\bar{\beta}, \|\nabla \phi(z^t)\|^{1 + \hat{\beta}}\}. 
\]
\STATE{(Line search)~Set \(\alpha_t = \tilde{\beta}^{m_t}\), where \(m_t\) is the first nonnegative integer \(m\) for which}
\[
\phi(z^t + \tilde{\beta}^ms^t) \leq \phi(z^t) + \gamma \tilde{\beta}^m\langle \nabla \phi(z^t), s^t\rangle. 
\]
\STATE{Set \(z^{t + 1} = z^t + \alpha_t s^t\). }
\ENDFOR
\RETURN{\(\{z^t\}\)}
\end{algorithmic}
\end{algorithm*}

Notice that \(\frac{1}{\hat{\mu}_k}g(v) = \frac{\lambda}{\hat{\mu}_k}\|v\|_1\) and \({\rm Prox}_{g/\hat{\mu}_k}(v) = {\rm sign}(v) \circ \max\{\vert v\vert - \frac{\lambda}{\hat{\mu}_k}, 0\}\). For \(v^t \triangleq x_k - \frac{1}{\hat{\mu}_k}A_k^\top z^t\), 
we can always choose \(U^t = {\rm Diag}(u^t)\), where the diagonal elements \(u_i^t\) is given by
 \[
 u_i^t = \left\{
 \begin{array}{ll}
 0 & \quad {\rm if}~\vert v_i^t\vert \leq \frac{\lambda}{\hat{\mu}_k},\\
 1 & \quad {\rm otherwise},
 \end{array}
 \right.\quad i = 1, \cdots, n. 
 \]
Let \(\hat{z}_k := SSN(x_k, A_k, \hat{\mu}_k)\) be the return of Algorithm~\ref{alg:ssn} and \(\hat{x}_k = {\rm Prox}_{g/\hat{\mu}_k}(x_k - \frac{1}{\hat{\mu}_k}A_k^\top \hat{z}_k)\). Then we have \(\hat{\mu}_k(x_k - \frac{1}{\hat{\mu}_k}A_k^\top \hat{z}_k - \hat{x}_k) \in \partial g(\hat{x}_k)\). 
 Notice that the first-order optimality condition of Problem~\eqref{eq:xsub} leads to 
 \[
 -\varepsilon_k \in \nabla f(x_k) + H_k(\hat{x}_k - x_k) + \partial g(\hat{x}_k). 
 \]
It is easy to verify that the accuracy condition~\eqref{eq:errvek} holds when \(\hat{z}_k\) satisfies  
 \begin{equation}\label{eq:xssn}
 \|\!\nabla \!f(x_k) \!+\! \hat{\mu}_k(x_k \!-\! \frac{1}{\hat{\mu}_k}A_k^\top \hat{z}_k \!-\! \hat{x}_k) \!+\! H_k(\hat{x}_k \!-\! x_k)\| 
\!\leq\!  \frac{\mu_2}{2}\min\{1, \|\mathcal{G}(x_k)\|^{\delta}\}\|\hat{x}_k \!-\! x_k\|.
 \end{equation}
 Next, we show there exists \(\hat{z}_k\) such that~\eqref{eq:xssn} holds. Denote \(x_k^t = {\rm prox}_{g/\hat{\mu}_k}(x_k - \frac{1}{\hat{\mu}_k}A_k^\top z^t)\), where \(\{z^t\}_{t\in\mathbb{N}}\) is the sequence generated by Algorithm~\ref{alg:ssn}. From the first-order optimality condition, we have 
 \[
 0 \in \partial g(x_k^t) + \hat{\mu}_k(x_k^t - x_k) + A_k^\top z^t, \quad t\in\mathbb{N}.
 \]
 From~\cite{LST18}, we know that Algorithm~\ref{alg:ssn} satisfies \(\nabla \phi(z^t)\to 0\) as \(t \to\infty\), which yields \(A_k^\top \nabla \phi(z^t) \to 0\) as \(t\to \infty\). Moreover, 
\begin{align*}
-A_k^\top \nabla\phi(z^t) = & \nabla f(x_k) - A_k^\top z^t + A_k^\top A_k(x_k^t - x_k)\\
=& \nabla f(x_k) + H_k(x_k^t - x_k) + \hat{\mu}_k(x_k - x_k^t) - A_k^\top z^t.  
\end{align*}
Suppose \eqref{eq:xssn} does not hold for any \(t \in \mathbb{N}\), that is, 
\[
\|\nabla f(x_k) + H_k(x_k^t - x_k) + \hat{\mu}_k(x_k - x_k^t) - A_k^\top z^t\| > \frac{\mu_2}{2}\min\{1, \|\mathcal{G}(x_k)\|^{\delta}\}\|x_k^t - x_k\|.
\]
We have 
\[
0 \leq \frac{\mu_2}{2}\min\{1, \|\mathcal{G}(x_k)\|^{\delta}\}\|x_k^t - x_k\| \leq \|-A_k^\top \nabla \phi(z^t)\| \to 0, \quad {\rm as}~t\to\infty, 
\]
which implies \(x_k^t \to x_k\) as \(t\to \infty\), that is, \(x_k = \arg\min_x q_k(x)\) and \(\mathcal{G}(x_k) = 0\). This contradicts \(\|\mathcal{G}(x_k)\| > \epsilon_0\). 
Hence, we can choose \(\hat{x}_k := x_k^t\) such that~\eqref{eq:xssn} holds. 
\end{appendices}

\end{document}